\numberwithin{equation}{section}
\theoremstyle{plain}
\newtheorem{thm}{Theorem}[section]
\newtheorem*{mainthm}{Main Theorem}
\newtheorem*{mainthmprec}{Main Theorem (precise version)}
\newtheorem{lem}[thm]{Lemma}
\newtheorem{cor}[thm]{Corollary}
\newtheorem{prop}[thm]{Proposition}
\newtheorem{obs}[thm]{Observation}
\theoremstyle{definition}
\newtheorem{exmpl}[thm]{Example}
\theoremstyle{remark}
\newtheorem*{remnns}{Remarks}
\newcommand{\R}{\mathbb{R}}
\newcommand{\Q}{\mathbb{Q}}
\newcommand{\N}{\mathbb{N}}
\newcommand{\Z}{\mathbb{Z}}
\newcommand{\optionalsubsuper}[3]{%
\ifthenelse{\equal{#2}{}}{%
  \ifthenelse{\equal{#3}{}}{%
    #1}{%
    #1^{#3}}
  }{%
  \ifthenelse{\equal{#3}{}}{%
    #1_{#2}}{%
    #1_{#2}^{#3}}}}
\newcommand{\union}{\cup}
\newcommand{\intersect}{\cap}
\renewcommand{\implies}{\Rightarrow}
\newcommand{\into}{\hookrightarrow}
\newcommand{\pow}{\mathfrak{P}}
\newcommand{\defeq}{\mathrel{\mathop{:}}=}
\newcommand{\scaprod}[2]{\left({}#1\mid{}#2\right)}
\newcommand{\gen}[1]{\left\langle #1\right\rangle}
\newcommand{\abs}[1]{\left\lvert#1\right\rvert}
\newcommand{\Family}{\mathfrak{F}}
\newcommandx{\OrbitCat}[2][1=\Group,2=\Family]%
   {\optionalsubsuper{\mathcal{O}}{#2}{}#1}
\newcommand{\Skeleton}[2]{#1^{(#2)}}
\newcommand{\Group}{G}
\DeclareMathOperator{\ind}{ind}
\DeclareMathOperator{\length}{length}
\newcommandx{\BredonFP}[2][1=\Family]{\ensuremath{#1\textrm{-}\mathrm{FP}_{#2}}}
\DeclareMathOperator{\diag}{diag}
\DeclareMathOperator{\SL}{SL}
\DeclareMathOperator{\GL}{GL}
\DeclareMathOperator{\CAT}{CAT}
\newcommand{\AbelsScheme}{\mathbf{G}}
\newcommand{\BorelScheme}{\mathbf{B}}
\newcommand{\TorusScheme}{\mathbf{T}}
\newcommand{\UniScheme}{\mathbf{U}}
\newcommand{\Space}{X}
\newcommand{\Building}{X}
\newcommand{\EBuilding}{\Building^1}
\newcommand{\SimpBuilding}{\Delta}
\newcommand{\ESimpBuilding}{\SimpBuilding^1}
\newcommand{\EApartment}{\Sigma^1}
\newcommand{\Apartment}{\Sigma}
\newcommand{\ECone}{\bar{C}^1}
\newcommand{\EChamber}{C^1}
\newcommand{\InnerChamber}{c}
\newcommand{\AltInnerChamber}{d}
\newcommand{\DGroup}{\Gamma}
\newcommand{\Abels}{G}
\newcommand{\Borel}{B}
\newcommand{\Torus}{T}
\newcommand{\Uni}{U}
\newcommand{\AltVector}{w}
\newcommand{\Point}{x}
\newcommand{\Vector}{v}
\newcommand{\VecOne}{\Vector^1}
\newcommand{\VecTwo}{\Vector^2}
\newcommand{\VecNum}[1]{\Vector^{#1}}
\newcommand{\VecOT}{\Vector}
\newcommand{\PointAtInfinity}{\xi}
\newcommand{\AltPointAtInfinity}{\zeta}
\newcommand{\Busemann}{\beta}
\newcommand{\BuseOne}{\Busemann^1}
\newcommand{\BuseTwo}{\Busemann^2}
\newcommand{\BuseNum}[1]{\Busemann^{#1}}
\newcommand{\BuseOT}{\Busemann}
\newcommand{\PAIOne}{\PointAtInfinity^1}
\newcommand{\PAITwo}{\PointAtInfinity^2}
\newcommand{\PAINum}[1]{\PointAtInfinity^{#1}}
\newcommand{\PAIOT}{\PointAtInfinity}
\newcommand{\HyperOne}{H^1}
\newcommand{\HyperTwo}{H^2}
\newcommand{\HyperOT}{H}
\newcommand{\VecPerp}{\ell}
\newcommand{\Project}{\operatorname{pr}_1}
\newcommand{\Ray}{\gamma}
\newcommand{\LocalField}{K}
\newcommand{\Ring}{\mathcal{O}}
\newcommand{\Valuation}{\nu}
\newcommand{\Uniformizer}{\pi}
\newcommand{\Lattice}{\Lambda}
\newcommand{\LCVector}{f}
\newcommand{\EqClass}[1]{[#1]}
\newcommand{\Root}{\alpha}
\newcommand{\BredonF}{\underbar{\ensuremath{\mathit{F}}\!}\,}
\newcommand{\BredonFPfin}{\underbar{\ensuremath{\mathit{FP}}\!}\,}
\newcommand{\ClassicalF}{\mathit{F}}
\newcommand{\ClassicalFP}{\mathit{FP}}
\newcommand{\Norm}{\alpha}
\newcommand{\ed}{\operatorname{ed}}
\newcommand{\Retraction}{\rho}
\newcommand{\GroupRetraction}{\eta}
\newcommand{\Lk}{\operatorname{Lk}}
\newcommand{\idmatrix}{\operatorname{id}}
\newcommand{\Involution}{\sigma}
\newcommand{\FixedPointSet}{Y}
\newcommand{\FiltrationSet}{Z}
\newcommand{\ExtendingFactor}{L}
\newcommand{\Angle}{\theta}
\newcommand{\SimpLine}{\R}
\newcommand{\ABasis}{a}
\newcommand{\Basis}{e}
\newcommand{\Partition}{\mathcal{I}}
\newcommand{\trealize}[1]{{\lvert {#1} \rvert}}
\newcommand{\omod}{\mathop{\mathrm{mod}}}
\newcommand{\odiv}{\mathop{\mathrm{div}}}
\title{Abels's groups revisited}
\author{Stefan Witzel}
\begin{document}

\maketitle

\begin{abstract}
\noindent
We generalize a class of groups introduced by Herbert Abels to produce examples of virtually torsion free groups that
have Bredon-finiteness length $m-1$ and classical finiteness length $n-1$ for all $0 < m \le n$.

\noindent
The proof illustrates how Bredon-finiteness properties can be verified using geometric methods and a version of Brown's criterion due to Martin Fluch and the author.
\end{abstract}

Let $\AbelsScheme_n$ be the algebraic group of invertible upper triangular $(n+1)$-by-$(n+1)$ matrices whose extremal diagonal entries are $1$. The groups $\AbelsScheme_n(\Z[1/p])$ where $p$ is a prime were introduced by Abels because they have interesting finiteness properties. Namely, it was shown in \cite{abels79,strebel84,abebro87,brown87} that $\AbelsScheme_n(\Z[1/p])$ is of type $\ClassicalF_{n-1}$ but not of type $\ClassicalF_n$.

Recall that a group $\DGroup$ is \emph{of type $\ClassicalF_n$} if it admits a classifying space $X$ whose $n$-skeleton $\Skeleton{X}{n}$ is compact modulo the action of $\DGroup$.
A \emph{classifying space} is a contractible CW complex on which $\DGroup$ acts freely. Closely related to these topological finiteness properties are the homological finiteness properties of being of type $\ClassicalFP_n$: by definition $\DGroup$ is \emph{of type $\ClassicalFP_n$} if the trivial $\Z\DGroup$-module $\Z$ admits a projective resolution $(P_i)_{i \in \N}$ with $P_i$ finitely generated for $i \le n$. It is not hard to see that $\ClassicalF_n \implies \ClassicalFP_n$.

For a group $\DGroup$ that has torsion it is sometimes more natural to consider a \emph{classifying space for proper
actions} for $\DGroup$. This is a CW complex on which $\DGroup$ acts rigidly in such a way that the fixed point set of
every finite subgroup is (nonempty and) contractible and the fixed point set of every infinite subgroup is empty. We say
that $\DGroup$ is \emph{of type $\BredonF_n$} if it admits a classifying space for proper actions whose $n$-skeleton is
compact modulo the action of $\DGroup$. There is a homology theory developed by Glen Bredon \cite{bredon67} and
generalized by Wolfgang Lück \cite{lueck89} that describes the homological aspects of proper actions just as
usual homology does for free actions. In particular, we get a notion of Bredon-finiteness properties $\BredonFPfin_n$
and again $\BredonF_n \implies \BredonFPfin_n$. For the definition we refer the reader to \cite{fluwit}.

The lower finiteness properties have more concrete interpretations: a group is of type $\ClassicalF_1$ if and only if it
is finitely generated, it is of type $\ClassicalF_2$ if and only if it is finitely presented, and it is of type
$\BredonFPfin_0$ if and only if it has finitely many conjugacy classes of finite subgroups,
\cite[Lemma~3.1]{kromapnuc09}.

We define the \emph{classical finiteness length} of $\DGroup$ to be the supremum over all those $n$ for which $\DGroup$
is of type $\ClassicalFP_n$. The \emph{Bredon-finiteness length} is defined analogously. We can now state a version of
our Main~Theorem.

\begin{mainthm}
For $0 < m \le n$ there is a solvable algebraic group $\AbelsScheme$ such that for every odd prime $p$ the group $\AbelsScheme(\Z[1/p])$ has classical finiteness length $n-1$ and has Bredon-finiteness length $m-1$.
\end{mainthm}

Related examples were obtained via more algebraic means in \cite{kocmarnuc11}. Other examples of groups with
torsion that separate between Bredon-finiteness properties can be found in
\cite[Examples~3,4]{leanuc03}.

The precise version of the Main~Theorem depends on some combinatorial conditions which are formulated in Section~\ref{sec:precise}. After some basic facts about Bruhat--Tits buildings and $\CAT(0)$-spaces in Section~\ref{sec:buildings}, we establish the classical finiteness length of the groups in Section~\ref{sec:classical}. The Bredon-finiteness length is verified in Section~\ref{sec:bredon}. Appendix~\ref{sec:extended_simplicial} describes the natural simplicial model for the extended Bruhat--Tits building of $\GL_n(\LocalField)$. This should be well known but the author could not find a good reference.

\paragraph{Acknowledgments.}I would like to thank Herbert Abels, Kai-Uwe Bux, Martin Fluch, Giovanni Gandini, Linus
Kramer, Ian Leary, Marco Schwandt, Daniel Skodlerack and Matthew Zaremsky for helpful comments. I also
greatfully acknowledge support through the SFBs 878 in Münster and 701 in Bielefeld.

\section{Precise statement of Main Theorem}
\label{sec:precise}

Fix $n \in \N$ and consider two nonzero integer vectors
\[
\VecOne = (\VecOne_1,\ldots,\VecOne_{n+1}) \quad \text{and}\quad \VecTwo = (\VecTwo_1,\ldots,\VecTwo_{n+1})
\]
which satisfy the following conditions:
\begin{enumerate}
\item The sequences $(\VecOne_i)_i$ and $(\VecTwo_i)_i$ are monotonically decreasing.\label{item:monotonicity}
\item $\sum_{i} \VecOne_i > 0$ and $\sum_i \VecTwo_i \le 0$.\label{item:position_relative_sigma}
\end{enumerate}
Denote by $\AbelsScheme_{\VecOne,\VecTwo}$ the algebraic group defined by
\[
\AbelsScheme_{\VecOne,\VecTwo}(A) = \left\{
\left(
\begin{array}{cccc}
d_1& * & \cdots & *\\
0 & \ddots & \ddots & \vdots \\
\vdots & \ddots & \ddots & * \\
0 & \cdots & 0 & d_{n+1}
\end{array}
\right)
\in \GL_{n+1}(A)
\mathrel{\Bigg|}
\prod_i d_i^{\VecOne_i} = 1 = \prod_i d_i^{\VecTwo_i}
\right\}
\]

We define a new vector by
\[
\VecOT \defeq \VecTwo - \frac{\sum_i \VecTwo_i}{\sum_i \VecOne_i} \VecOne\text{ .}
\]
Note that $\VecOT$ satisfies $\sum_i \VecOT_i = 0$.

By a \emph{partition} of $I \defeq \{1,\ldots,n+1\}$ we mean a set $\Partition \subseteq \pow_{\ne \emptyset}(I)$ of nonempty subsets, its \emph{blocks}, that disjointly cover $I$.
A partition $\{J^+,J^-\}$ is called \emph{elementary admissible} (relative to $\VecOne$ and $\VecTwo$) if $\sum_{i \in J^-} \VecOne_i$  and $\sum_{i \in J^-} \VecTwo_i$ are even. The trivial partition $\{I\}$ is also considered elementary admissible. A partition is called \emph{admissible} if it is the (coarsest) common refinement of elementary admissible partitions.
We say that $\Partition$ is a \emph{partition of $\VecOT$} if $\sum_{i \in J} \VecOT_i =0$ for every block $J$ of
$\Partition$. The \emph{essential blocks} of a partition of $\VecOT$ are the blocks $J$ on which $\VecOT$ is not
constant zero, that is, for which there is an $i \in J$ such that $\VecOT_i \ne 0$. The \emph{essential dimension} of a
partition of $\VecOT$ is
\[
\ed(\Partition) = \sum_{J} (\abs{J} - 1)
\]
where the sum runs over the essential blocks of $\Partition$. We can now define
\[
m = m(\VecOne,\VecTwo) \defeq \min \{ \ed(\Partition) \mid \Partition \text{ is an admissible partition of }\VecOT\}
\]
and state:

\begin{mainthmprec}
Let $n$, $\VecOne$, $\VecTwo$, and $m$ be as above. For every odd prime $p$ the group $\AbelsScheme_{\VecOne,\VecTwo}(\Z[1/p])$ is of type $\ClassicalF_{n-1}$ but not of type $\ClassicalFP_{n}$ and is of type $\BredonFPfin_{m-1}$ but not of type $\BredonFPfin_{m}$.
\end{mainthmprec}

\begin{remnns}
\begin{enumerate}
\item Since the trivial partition is admissible, we have $m \le n$. Since every essential block of a partition of $\VecOT$ must have size at least $2$, we have $m \ge 1$.
\item Admissibility of a partition is not a strong restriction. In fact, if all entries of $\VecOne$ and $\VecTwo$ are even, then every partition is admissible.
\item That the Main Theorem only shows the group to be of type $\BredonFPfin_{m-1}$ instead of $\BredonF_{m-1}$ is due to the fact that there is no version of Brown's criterion for $\BredonF_2$. The reason is that \cite{brown84} does not directly translate to the context of proper actions. Once a criterion for $\BredonF_2$ is available, our method of proof should give type $\BredonF_{m-1}$.
\item The restriction to odd primes is due to the fact that involutions in the building associated to $\GL_{n+1}(\Q_2)$ have larger fixed point set than they should, cf.\ Proposition~\ref{prop:involution_fixed}. In the case $p=2$ Lemmas~\ref{lem:only_2-torsion} and \ref{lem:conjugate_to_diagonal} imply that the group is of type $\BredonFPfin_0$. It is not clear to the author what the higher Bredon-finiteness properties are in that case.
\end{enumerate}
\end{remnns}

We give some examples which in particular allow us to recover the previous formulation of the Main~Theorem. Denote the standard basis of $\Z^{n+1}$ by $\ABasis_1,\ldots,\ABasis_{n+1}$.

\begin{exmpl}
If $\VecOne = \ABasis_1$ and $\VecTwo = -\ABasis_{n+1}$, then $\Gamma = \AbelsScheme_{\VecOne,\VecTwo}(\Z[1/p])$ is just Abels's group $\AbelsScheme_n(\Z[1/p])$. In this case $\VecOT = \ABasis_1 - \ABasis_{n+1}$ and the elementary admissible partition into $J^+ = \{1,n+1\}$ and $J^- = \{2,\ldots,n\}$ shows that $m=1$. Therefore, the Main~Theorem states that $\Gamma$ is of type $\ClassicalF_{n-1}$ but not of type $\ClassicalFP_n$ and is of type $\BredonFPfin_0$ but not of type $\BredonFPfin_1$. The classical finiteness length was known by \cite[Theorem~A]{abebro87} and \cite[Theorem~6.1]{brown87}. To prove the first part of the theorem we use metric versions of the methods used there. Part of the translation is done in Appendix~\ref{sec:extended_simplicial}.
\end{exmpl}

\begin{exmpl}
For $0 < m \le n$, we may take $\VecOne = 2\sum_i \ABasis_i$ and $\VecTwo = -m \ABasis_{n+1} + \sum_{i = 1}^m \ABasis_i$.
Then $\VecOT = \VecTwo$ and every partition of $\VecOT$ must contain $\{1,\ldots,m,n+1\}$ in one block and therefore have essential dimension at least $m$. The partition into $J^+ = \{1,\ldots,m,n+1\}$ and $J^- = \{m+1,\ldots,n\}$ is elementary admissible and has essential dimension $m$. Thus we get groups of Bredon-finiteness length $m-1$ and classical finiteness length $n-1$ and recover the original formulation of the Main~Theorem.
\end{exmpl}

\begin{exmpl}
As an example of how admissibility comes into play let $n = 2k$ be even and consider the vectors $\VecOne = \ABasis_1 + \ldots + \ABasis_{k+1}$ and $\VecTwo = - \ABasis_{k+1} - \ldots - \ABasis_{n+1}$. Then $\VecOT = \ABasis_1 + \ldots + \ABasis_k - \ABasis_{k+2} - \ldots - \ABasis_{n+1}$. A partition of $\VecOT$ with the minimal essential dimension of $k$ is into $\{1,n+1\}, \ldots, \{k,k+2\}$. However, this partition is not admissible. If $k$ is even, a partition of $\VecOT$ of the minimal admissible essential dimension of $3/2 \cdot k$ is into $\{1,2,n,n+1\}, \ldots, \{k-1,k,k+2,k+3\}$. If $k$ is odd, the minimal admissible essential dimension is $3/2 \cdot (k-1) + 2$ and realized by the partition $\{1,2,n,n+1\}, \ldots, \{k-2,k-1,k+3,k+4\}, \{k,k+1,k+2\}$.
So if we set $\Gamma = \AbelsScheme_{\VecOne,\VecTwo}(\Z[1/p])$ and $\Gamma' = \AbelsScheme_{2\VecOne,2\VecTwo}(\Z[1/p])$, we get: $\Gamma$ is a subgroup of finite index in $\Gamma'$. The Bredon-finiteness length of $\Gamma'$ is $k$ while the Bredon-finiteness length of $\Gamma$ is $3/2\cdot k$ respectively $3/2\cdot (k-1) + 2$. Of course, all groups considered here are virtually torsion free and hence virtually of type $\BredonFPfin_{n-1}$.
\end{exmpl}

The plan to prove the Main~Theorem is as follows. The group $\DGroup \defeq \AbelsScheme(\Z[1/p])$ acts on the extended Bruhat--Tits building $\EBuilding$ associated to $\GL_{n+1}(\Q_p)$. Cell stabilizers are arithmetic and thus of type $\ClassicalF_\infty$. The vectors $\VecOne$ and $\VecTwo$ define horospheres $\HyperOne$ and $\HyperTwo$ that are invariant under the action of $\DGroup$. Moreover, the action of $\DGroup$ on $\HyperOne \intersect \HyperTwo$ is cocompact. The horosphere $\HyperOne$ can be identified with the (non-extended) Bruhat--Tits building $\Building$ in such a way that $\HyperOne \intersect \HyperTwo$ is identified with a horosphere in $\Building$. It is known that horospheres in $\Building$ are $(n-2)$-connected. More precisely, let $\Busemann$ be the Busemann function whose $0$-level is the horosphere. Then the maps $\Busemann^{-1}([0,s]) \into \Busemann^{-1}([0,s+1])$ induce isomorphisms in $\pi_{k}$ for $k<n-1$ and epimorphisms that are infinitely often non-injective in $\pi_{n-1}$.

With these ingredients, the classical finiteness length follows from Brown's criterion, which we state below. But first we have to recall some definitions. Recall that a space $\Space$ is \emph{$n$-connected} if $\pi_k(\Space) = 1$ for $k \le n$ and is \emph{$n$-acyclic} if $\tilde{H}_k(\Space) = 0$ for $k \le n$. The action of a group $\DGroup$ on a CW-complex $\FiltrationSet$ is called \emph{rigid} if the stabilizer of every cell fixes that cell pointwise. A system of groups $(A_s \to A_{s+1})_{s \in \N}$ is called \emph{essentially trivial} if for every $s$ there is a $t \ge s$ such that the map $A_s \to A_{t}$ is trivial.

\begin{thm}[Brown's criterion {\cite[Theorems~2.2,3.2]{brown87}}]
\label{thm:browns_criterion}
Let $\DGroup$ act rigidly on an CW-complex $\FiltrationSet$. Assume that $\FiltrationSet$ is $(n-1)$-connected. Assume also that the stabilizer of each $k$-cell is of type $\ClassicalF_{n-k}$. Let $(\FiltrationSet_s)_{s \in \N}$ be a filtration of $\FiltrationSet$ by $\DGroup$-invariant and $\DGroup$-cocompact subspaces. Then $\DGroup$ is of type $\ClassicalF_n$ if and only if the system
\[
\pi_k(\FiltrationSet_s) \to \pi_k(\FiltrationSet_{s+1})
\]
is essentially trivial for $k<n$.
The same statement holds with ``$(n-1)$-connected'' replaced by ``$(n-1)$-acyclic'', ``$\pi_k$'' replaced by ``$\tilde{H}_k$'', and ``$\ClassicalF_n$'' replaced by ``$\ClassicalFP_n$''.
\end{thm}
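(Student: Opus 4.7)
The plan is to assemble a free equivariant CW model of a classifying space for $\DGroup$ from the $\DGroup$-complex $\FiltrationSet$ by thickening each cell orbit with a stabilizer classifying space. For each $\DGroup$-orbit of $k$-cells $\sigma$, the hypothesis that $\DGroup_\sigma$ is of type $\ClassicalF_{n-k}$ provides a free $\DGroup_\sigma$-CW complex $E\DGroup_\sigma$ with finite $(n-k)$-skeleton modulo $\DGroup_\sigma$. Rigidity of the action on $\FiltrationSet$ is essential here: it guarantees that we may replace $\DGroup\cdot\sigma$ by $\DGroup\times_{\DGroup_\sigma}(\sigma\times E\DGroup_\sigma)$ and glue these pieces consistently along the boundary structure of $\FiltrationSet$. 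Call the resulting free $\DGroup$-CW complex $Y$. An equivariant spectral sequence whose $E^2$-page involves the (co)homology of the stabilizer classifying spaces, together with $(n-1)$-connectedness of $\FiltrationSet$, shows that $Y$ itself is $(n-1)$-connected. Consequently, attaching free $\DGroup$-cells in dimensions $>n$ extends $Y$ to an honest classifying space for $\DGroup$, and so $\DGroup$ is of type $\ClassicalF_n$ if and only if $Y^{(n)}$ is $\DGroup$-cocompact.

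With this reduction in hand the filtration enters. Let $Y_s$ denote the thickening of $\FiltrationSet_s$; each $Y_s$ is $\DGroup$-cocompact, but $Y_s$ need not be $(n-1)$-connected, and the defect is precisely measured by the homotopy system $\pi_k(\FiltrationSet_s)\to\pi_k(\FiltrationSet_{s+1})$ for $k<n$. For the forward direction, one takes a finite $\DGroup$-equivariant model of $E\DGroup$ through dimension $n$ (available when $\DGroup$ is of type $\ClassicalF_n$) and maps it equivariantly into $\FiltrationSet$ using $(n-1)$-connectedness; $\DGroup$-cocompactness of the filtration forces the image to lie in some $\FiltrationSet_s$, and comparing fillings of spheres in $\FiltrationSet_s$ versus in $\FiltrationSet$ reads off essential triviality. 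For the converse, essential triviality lets one inductively choose fillings of spheres inside some $\FiltrationSet_{t(s)}$ and attach finitely many equivariant cells to $Y_s$ so as to kill $\pi_k$ in the right range, keeping each skeleton $\DGroup$-cocompact through dimension $n$.

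The hard part is tracking the interaction of the filtration with the thickening: cells added to fill a sphere at stage $s$ create new obstruction classes that must also die at some later stage, and controlling this requires a double induction on $k$ and the filtration index $s$, using the stabilizer hypothesis $\ClassicalF_{n-k}$ to kill obstructions inside each orbit at the appropriate dimension. The homological ($\ClassicalFP_n$) variant is technically cleaner and worth handling first: one replaces the thickening by finite-type projective resolutions of the permutation modules $\Z[\DGroup/\DGroup_\sigma]$ through degree $n-k$, assembles them using the equivariant cellular spectral sequence of $\FiltrationSet\to\FiltrationSet/\DGroup$, and applies the standard characterization of $\ClassicalFP_n$ in terms of finite generation of the partial resolution through degree $n$. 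In both versions the ``essentially trivial'' condition (rather than ``trivial'') is sharp: it is precisely what allows cycles/spheres to be killed cofinally in the filtration without forcing infinitely many cells in any single skeleton.
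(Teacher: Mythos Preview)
The paper does not prove this theorem at all. Theorem~\ref{thm:browns_criterion} is stated as a citation of \cite[Theorems~2.2,3.2]{brown87} and used as a black box; there is no proof or proof sketch in the paper to compare your proposal against.

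That said, your outline is broadly in the spirit of Brown's original argument: one does indeed pass from the $\DGroup$-complex $\FiltrationSet$ to a free $\DGroup$-complex via a Borel-type construction (or, in the homological version, via the equivariant cellular spectral sequence with coefficients in projective resolutions over the stabilizers), and the essential triviality of the directed system is exactly what allows one to arrange finite type through dimension $n$. A few points where your sketch is vague would need care in a full proof: the claim that $Y$ is $(n-1)$-connected requires more than just the connectivity of $\FiltrationSet$ and a spectral sequence on homology---one has to control $\pi_1$ separately, typically via Brown's result on presentations \cite{brown84}; and the ``double induction'' you allude to is really the heart of the matter and is handled in \cite{brown87} by a careful comparison of filtered colimits of projective resolutions. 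But none of this is relevant to the present paper, which simply invokes the criterion.
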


To determine the Bredon-finiteness length, we have to take torsion into account. The only torsion elements that $\DGroup$ contains are of order $2$. Moreover, every finite subgroup is conjugate to a group of diagonal matrices. The fixed point set of such a group is a product of extended Bruhat--Tits buildings. More precisely, it is the extended Bruhat--Tits building of the centralizer of the finite group. The products that can arise are described by admissible partitions. The horosphere in the fixed point set is a product of a horosphere in the essential factors, those corresponding to essential blocks, and of the remaining factors. Its connectivity is two less than the essential dimension. From this, the Bredon-finiteness length can be deduced using the following version of Brown's criterion from \cite{fluwit}:

\begin{thm}
\label{thm:browns_criterion_for_bredon}
Let $\DGroup$ act rigidly on a CW-complex $\FiltrationSet$. Assume that for every finite subgroup $F < \DGroup$ the fixed point set $\FiltrationSet^F$ is $(n-1)$-acyclic. Assume also that the stabilizer of each $k$-cell is of type $\BredonFPfin_{n-k}$. Let $(\FiltrationSet_s)_{s \in \N}$ be a filtration of $\FiltrationSet$ by $\DGroup$-invariant and $\DGroup$-cocompact subspaces. Then $\DGroup$ is of type $\BredonFPfin_n$ if and only if for $k<n$ the following holds: for every $s$ there is an $t \ge s$ such that the maps
\[
\tilde{H}_k(\FiltrationSet_s^F) \to \tilde{H}_k(\FiltrationSet_{t}^F)
\]
are trivial for all finite subgroups $F$.
\end{thm}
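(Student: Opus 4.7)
The plan is to reproduce Brown's classical argument inside the category of left Bredon modules over the orbit category $\OrbitCat[\DGroup][\FIN]$. The central object is the cellular Bredon chain complex $\BredonC{\ast}(\FiltrationSet)$, which assigns to a coset space $\DGroup/F$ the ordinary cellular chain complex $C_\ast(\FiltrationSet^F)$. Rigidity of the action implies that $\BredonC{k}(\FiltrationSet)$ splits as a direct sum, indexed by $\DGroup$-orbits of $k$-cells, of Bredon modules induced from the corresponding cell stabilizers; in particular each $\BredonC{k}(\FiltrationSet)$ is free, and $\DGroup$-cocompactness of $\FiltrationSet_s$ makes $\BredonC{k}(\FiltrationSet_s)$ finitely generated.

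For the ``if'' direction I would induct on $n$. Splicing, orbit by orbit, finite-type projective Bredon resolutions of length $n-k$ over the $k$-cell stabilizers---which exist by the $\BredonFPfin_{n-k}$ hypothesis---into $\BredonC{\ast}(\FiltrationSet_s)$ yields a partial projective resolution of $\BredonZ$ by finitely generated Bredon modules. A Shapiro-type identity for induced Bredon modules identifies the homology of this partial resolution, in the degrees of interest, with the reduced Bredon homology of $\FiltrationSet_s$, whose value at $\DGroup/F$ is $\tilde H_\ast(\FiltrationSet_s^F)$. The essential triviality hypothesis then plays exactly the role it does in Brown's original proof: a homology class in $\tilde H_k(\FiltrationSet_s^F)$ dies in some $\FiltrationSet_t^F$, which is precisely what is needed to extend the resolution one more step using only finitely many new generators, upgrading $\BredonFPfin_{n-1}$ to $\BredonFPfin_n$. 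The converse direction is a comparison of resolutions: from a finite-type projective resolution of $\BredonZ$ of length $n$, the $(n-1)$-acyclicity of the limiting $\FiltrationSet^F$ lets one construct an equivariant chain map into $\bigl(\BredonC{\ast}(\FiltrationSet_s)\bigr)_s$ which, by finite generation in each degree, factors through some finite stage $\FiltrationSet_t$, forcing the desired essential triviality.

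The main obstacle I expect is the algebraic bookkeeping. First, one has to verify that the cellular chain complex really is a complex of free Bredon modules and that $\DGroup$-cocompactness corresponds to finite generation in the Bredon module category. Second, and more delicately, one needs the Shapiro-type identity relating the $\BredonFPfin_m$-property of a stabilizer to the induced Bredon module being of type $\BredonFPfin_m$, so that stabilizer finiteness can be inserted into the cellular complex and the homological calculation above reduces, at each orbit type, to ordinary reduced homology of the fixed-point sets. Once these two ingredients are in place, the geometric portion of Brown's criterion carries over essentially verbatim.
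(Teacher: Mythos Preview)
The paper does not contain a proof of this theorem; it is quoted from \cite{fluwit} and used as a black box. So there is nothing in the present paper to compare your argument against.

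That said, your outline matches the strategy of the cited source: work in the category of $\OrbitCat[\DGroup][\FIN]$-modules, identify the cellular Bredon chains $\BredonC{k}(\FiltrationSet)$ as free modules with summands induced from cell stabilizers, and then rerun Brown's spectral-sequence/resolution argument with the constant coefficient system $\BredonZ$ in place of the trivial $\Z\DGroup$-module. The two ``obstacles'' you single out---that $\BredonC{k}(\FiltrationSet_s)$ is finitely generated free under rigidity and cocompactness, and the compatibility of induction with the $\BredonFPfin$ condition for stabilizers---are exactly the technical lemmas that \cite{fluwit} has to establish before the classical proof transfers. Your sketch of the converse via a comparison map from a finite-type resolution into the directed system of cellular complexes is also the expected argument. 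One point to be careful about, and which the statement emphasizes, is uniformity: for each $s$ a \emph{single} $t$ must work for \emph{all} finite $F$ simultaneously, not a separate $t$ for each $F$. This is what distinguishes the Bredon statement from simply applying the classical criterion fixed-point-set by fixed-point-set, and it is exactly where finite generation of Bredon modules (as opposed to ordinary modules) enters. Your proposal gestures at this but does not make it explicit; when you write the details, make sure the ``factors through some finite stage $\FiltrationSet_t$'' step really produces one $t$ for the whole Bredon module, not one per orbit type.
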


\section{Buildings}
\label{sec:buildings}

From now on we fix $n$, $\VecOne$, $\VecTwo$, and $p$ and write $\AbelsScheme$ for $\AbelsScheme_{\VecOne,\VecTwo}$ and put $\DGroup \defeq \AbelsScheme(\Z[1/p])$. To prove the theorem we have to exhibit a space $\FiltrationSet_0$ on which $\DGroup$ acts cocompactly with good stabilizers. The finiteness properties of $\DGroup$ will then correspond to the connectivity of $\FiltrationSet_0$.

The starting point for the construction of $\FiltrationSet_0$ is the Bruhat--Tits building $\Building$ associated to $\GL_{n+1}(\Q_p)$. Recall that $\Building$ is a thick, irreducible, euclidean building of type $\tilde{A}_n$ and in particular is a $\CAT(0)$-space \cite[Theorem~11.16]{abrbro}. We denote by $\EBuilding$ the extended building $\Building \times \ExtendingFactor$, where $\ExtendingFactor$ is a euclidean line. The action of $\GL_{n+1}(\Q_p)$ on $\EBuilding$ is given by
\begin{equation}
\label{eq:extended_action}
g.(x,r) = (g.x,r-\frac{1}{n+1}\Valuation(\det g)) \text{ ,}
\end{equation}
see \cite[Paragraphe~2]{brutit84b}. We write $\Project\colon\EBuilding \to \Building$ for the projection onto the first factor.

We will consider the following subgroups of $\GL_{n+1}$: the group $\BorelScheme$ of upper triangular matrices, the group $\TorusScheme$ of diagonal matrices, and the group $\UniScheme$ of strict upper triangular matrices. Non-bold letters will denote the corresponding groups of $\Q_p$ points, that is $\Abels = \AbelsScheme(\Q_p)$, $\Borel = \BorelScheme(\Q_p)$ and so on.

There is a unique apartment $\EApartment$ of $\EBuilding$ that is invariant under the action of $\Torus$. We can identify $\EApartment$ with $\R^{n+1}$ in such a way that the action of $\Torus$ is given by
\begin{equation}
\label{eq:torus_action}
\diag(d_1,\ldots,d_{n+1}).(\Point_1,\ldots,\Point_{n+1}) = (\Point_1 + \Valuation(d_1),\ldots,\Point_{n+1} + \Valuation(d_{n+1})) \text{ .}
\end{equation}
With this identification the apartment $\Apartment \defeq \EApartment \intersect \Building$ of $\Building$ is the hyperplane $\VecPerp^\perp$ where $\VecPerp = (1,\ldots,1)$.
The boundary $\partial \EApartment$ is an apartment of the spherical building $\partial \EBuilding$ that is fixed by $\Torus$. The group $\Borel$ fixes a chamber $\EChamber$ of $\partial \EApartment$. Making use of the above identification, we can describe the chamber $\EChamber$ as follows. The standard root system $\Root_i = \ABasis_i - \ABasis_{i+1}$, $1 \le i \le n$ of type $A_n$ defines a cone
\[
\ECone = \{\Point \in \EApartment \mid \scaprod{\Root_i}{x} \ge 0\}
\]
in $\EApartment$ and $\EChamber$ is the boundary of $\ECone$.

As a last ingredient from the theory of buildings consider the morphism $\GroupRetraction \colon \Borel \to \Torus$ that takes each matrix to its diagonal. Its kernel is $\Uni$. There is a corresponding map $\Retraction \colon \EBuilding \to \EApartment$, the retraction onto $\EApartment$ centered at $\EChamber$. It can be described by the property that it takes a ray $\Ray \colon [0,\infty) \to \EBuilding$ whose endpoint lies in $\EChamber$ to a ray $\Retraction \circ \Ray \colon [0,\infty) \to \EApartment$ that coincides with the original ray on an infinite interval. Both maps are linked by the relation
\begin{equation}
\label{eq:retraction_relation}
\Retraction(b.\Point) = \GroupRetraction(b).\Retraction(\Point)
\end{equation}
for $b \in \Borel$ and $\Point \in \EBuilding$. The image under $\GroupRetraction$ of $\Abels$ is just $\Abels \intersect \Torus$ (and similarly for $\DGroup$).

As a consequence of \eqref{eq:retraction_relation} we observe that $\Uni$ not only fixes $\EChamber$ but for every point $\PointAtInfinity \in \EChamber$ leaves invariant every Busemann function centered at $\PointAtInfinity$.

\section{Classical finiteness properties}
\label{sec:classical}

It is time to shed some light on the seemingly mysterious notions of Section~\ref{sec:precise}. Our group $\DGroup$ is a subgroup of $\GL_{n+1}(\Q_p)$ and therefore acts on the extended building $\EBuilding$. From \eqref{eq:torus_action} we see:

\begin{obs}
Let $\AltVector$ be a vector in $\EApartment$. An element $g = \diag(d_i) \in T$ leaves $\AltVector^\perp$ invariant if and only if $\sum_{i} \AltVector_i \Valuation(d_i) = 0$.
\end{obs}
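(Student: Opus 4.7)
The plan is to read off the action of $g$ on $\EApartment$ from formula \eqref{eq:torus_action} and translate the invariance condition into a linear one. First, I would observe that \eqref{eq:torus_action} says exactly that $g = \diag(d_1,\ldots,d_{n+1}) \in \Torus$ acts on $\EApartment \cong \R^{n+1}$ by translation by the vector
\[
\Valuation(g) \defeq (\Valuation(d_1),\ldots,\Valuation(d_{n+1})) \text{ .}
\]

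Next, since $\AltVector^\perp$ is a \emph{linear} hyperplane through the origin of $\EApartment$, a translation by a vector $v$ sends $\AltVector^\perp$ to the parallel affine hyperplane $\AltVector^\perp + v$, and this coincides with $\AltVector^\perp$ if and only if $v \in \AltVector^\perp$, i.e.\ $\scaprod{\AltVector}{v} = 0$.

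Combining the two, I get that $g.\AltVector^\perp = \AltVector^\perp$ if and only if $\scaprod{\AltVector}{\Valuation(g)} = \sum_i \AltVector_i \Valuation(d_i) = 0$, which is the claim. There is no real obstacle here; the only point worth double-checking is that under the chosen identification $\EApartment \cong \R^{n+1}$ the hyperplane $\AltVector^\perp$ is taken to be the linear one through the origin, which is consistent with the earlier description of $\Apartment = \VecPerp^\perp$ as a hyperplane through the origin in $\EApartment$.
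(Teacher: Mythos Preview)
Your argument is correct and is exactly what the paper intends: the observation is stated immediately after \eqref{eq:torus_action} with the remark ``From \eqref{eq:torus_action} we see,'' and your proof simply spells out that the action is translation by $(\Valuation(d_1),\ldots,\Valuation(d_{n+1}))$ and that a translation preserves the linear hyperplane $\AltVector^\perp$ precisely when the translation vector lies in it.
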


Assume in addition that $\AltVector \in \Z^{n+1}$ and write $d_i = a_i \cdot \Uniformizer^{\Valuation(d_i)}$ with $a_i \in \Ring^\times$. Then
$\prod_i d_i^{\AltVector_i} = \Uniformizer^{\sum_{i} \AltVector_i \Valuation(d_i)} \cdot \prod_i a_i^{\AltVector_i}$
where the second factor lies in $\Ring^\times$. Therefore $\prod_i d_i^{\AltVector_i} =1$
is sufficient for $g$ to leave $\AltVector^\perp$ invariant. Regarding $\VecOne$ and $\VecTwo$ as vectors in $\EApartment$ we obtain with \eqref{eq:retraction_relation}:

\begin{obs}
The group $\Abels \intersect \Torus$ leaves $(\VecOne)^\perp$ and $(\VecTwo)^\perp$ invariant. Consequently $\Abels$ leaves $\HyperOne \defeq \Retraction^{-1}((\VecOne)^\perp)$ and $\HyperTwo \defeq \Retraction^{-1}((\VecTwo)^\perp)$ invariant.
\end{obs}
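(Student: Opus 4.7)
The plan is to obtain the first assertion by combining the previous observation with the divisibility computation just carried out, and then bootstrap to the second assertion via the retraction relation \eqref{eq:retraction_relation}.

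For the first assertion, I note that $\VecOne$ and $\VecTwo$ are integer vectors by the standing hypothesis of Section~\ref{sec:precise}, so the computation displayed before the observation applies with $\AltVector = \VecOne$ and with $\AltVector = \VecTwo$. Concretely, if $g = \diag(d_1,\ldots,d_{n+1}) \in \Abels \intersect \Torus$, then by definition of $\AbelsScheme_{\VecOne,\VecTwo}$ we have $\prod_i d_i^{\VecOne_i} = 1 = \prod_i d_i^{\VecTwo_i}$. Writing $d_i = a_i \Uniformizer^{\Valuation(d_i)}$ with $a_i \in \Ring^\times$ and comparing valuations yields $\sum_i \VecOne_i \Valuation(d_i) = 0 = \sum_i \VecTwo_i \Valuation(d_i)$, and the preceding observation then says that $g$ leaves both $(\VecOne)^\perp$ and $(\VecTwo)^\perp$ invariant.

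For the second assertion, note that $\Abels \subseteq \Borel$ since $\AbelsScheme_{\VecOne,\VecTwo}$ is by construction a subgroup of the upper triangular matrices. Let $b \in \Abels$ and $\Point \in \HyperOne$, meaning $\Retraction(\Point) \in (\VecOne)^\perp$. Applying \eqref{eq:retraction_relation} gives
\[
\Retraction(b.\Point) = \GroupRetraction(b).\Retraction(\Point).
\]
The element $\GroupRetraction(b)$ lies in $\GroupRetraction(\Abels) = \Abels \intersect \Torus$, so by the first assertion it preserves $(\VecOne)^\perp$; hence $\Retraction(b.\Point) \in (\VecOne)^\perp$ and $b.\Point \in \HyperOne$. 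The same argument with $\VecTwo$ in place of $\VecOne$ shows $b.\HyperTwo \subseteq \HyperTwo$.

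There is no real obstacle here: the heart of the argument is packaged into the preceding observation and into \eqref{eq:retraction_relation}, and the only ingredient specific to $\Abels$ is the fact that its intersection with $\Torus$ is exactly cut out by the two monomial relations used to define the horospheres. The one point worth double-checking is that $\GroupRetraction(\Abels)$ equals $\Abels \intersect \Torus$ rather than merely being contained in $\Torus$: this holds because $\Abels$ is the semidirect product of $\Abels \intersect \Torus$ with $\Abels \intersect \Uni$, a fact already asserted in the last sentence of Section~\ref{sec:buildings}.
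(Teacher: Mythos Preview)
Your proof is correct and follows exactly the approach the paper takes: the paper packages the argument into the computation preceding the observation (showing that $\prod_i d_i^{\AltVector_i}=1$ forces $\sum_i \AltVector_i\Valuation(d_i)=0$) together with a bare reference to \eqref{eq:retraction_relation}, and you have simply spelled out the details. The extra check that $\GroupRetraction(\Abels)=\Abels\cap\Torus$ is already recorded at the end of Section~\ref{sec:buildings}, so nothing new is needed there either.
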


This discussion suggests that $\HyperOne \intersect \HyperTwo$ is the right space for $\DGroup$ to act on. Condition \eqref{item:monotonicity} means that $\VecOne$ and $\VecTwo$ point into $\ECone$. This in turn implies that $\HyperOne$ and $\HyperTwo$ are in fact horospheres. Indeed, let $\PAINum{j}$ be the endpoint of the geodesic ray spanned by $\VecNum{j}$ and let $\BuseNum{j}$ be the Busemann function corresponding to $[0,\PAINum{j})$. Then $\BuseNum{j}((\VecNum{j})^\perp) = 0$ and the fact that $\PAINum{j} \in \EChamber$ implies that $\BuseNum{j} = \BuseNum{j} \circ \Retraction$.

Condition \eqref{item:position_relative_sigma} implies that $\VecOne$ does not lie in $\Apartment$ but the geodesic segment $[\VecOne,\VecTwo]$ meets $\Apartment$. In fact, the intersection point is just $\VecOT$. As before, let $\PAIOT$ be the endpoint of the geodesic ray spanned by $\VecOT$ and let $\BuseOT$ be the corresponding Busemann function.

\begin{lem}
\label{lem:identification}
The restriction $\Project|_{\HyperOne}$ is a homeomorphism that takes horoballs centered at $\PAITwo$ to horoballs centered at $\PAIOT$.
\end{lem}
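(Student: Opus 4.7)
The idea is to reduce both claims to a linear calculation in the standard apartment $\EApartment$ and then globalize using the product structure $\EBuilding = \Building \times \ExtendingFactor$ and the invariance of the relevant Busemann functions under the retraction $\Retraction$.

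First, I would identify $\EApartment = \Apartment \oplus \R\VecPerp$ as an orthogonal decomposition, so that $\Project|_{\EApartment}$ is the orthogonal projection onto $\Apartment$. Condition~(ii) gives $\scaprod{\VecOne}{\VecPerp} = \sum_i \VecOne_i > 0$, so the hyperplane $\HyperOne \cap \EApartment = (\VecOne)^\perp$ is transverse to $\R\VecPerp$ and $\Project$ restricts to a linear isomorphism $(\VecOne)^\perp \to \Apartment$ with explicit inverse $y \mapsto y - \tfrac{\scaprod{y}{\VecOne}}{\sum_i \VecOne_i}\VecPerp$. Pulling $\BuseTwo$ (which on $\EApartment$ is proportional to $y \mapsto -\scaprod{y}{\VecTwo}$) back through this inverse and using the defining identity $\VecOT = \VecTwo - \tfrac{\sum_i \VecTwo_i}{\sum_i \VecOne_i}\VecOne$, the $\VecPerp$-contribution cancels and yields a positive multiple of $y \mapsto -\scaprod{y}{\VecOT}$, i.e.\ of $\BuseOT$ on $\Apartment$. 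This is the apartment-level content of both parts of the lemma.

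To pass from the apartment to the building, I use $\EBuilding = \Building \times \ExtendingFactor$ as a metric product. For the homeomorphism, each fibre $\Project^{-1}(y) = \{y\} \times \ExtendingFactor$ is a geodesic line parallel to $\VecPerp$; by the product structure, $\BuseOne$ restricts to an affine function on this line with slope proportional to $\scaprod{\VecOne}{\VecPerp} \neq 0$, so the fibre meets $\HyperOne = \BuseOne^{-1}(0)$ in exactly one point depending continuously on $y$. For the horoball statement, the invariance $\BuseTwo = \BuseTwo \circ \Retraction$ (from $\PAITwo \in \EChamber$) together with the analogous invariance of $\BuseOT$ under the retraction of $\Building$ onto $\Apartment$ centered at the fundamental chamber of $\partial\Apartment$ (which contains $\PAIOT$, since $\VecOT$ is a nonnegative combination of $\VecOne, \VecTwo \in \ECone$ and lies in $\Apartment$) combine with the compatibility of these two retractions through $\Project$ (a consequence of the product structure) to extend the apartment computation to all of $\HyperOne$. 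Hence $\Project$ carries horoballs centered at $\PAITwo$ to horoballs centered at $\PAIOT$, with at most a rescaling of the parameter.

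The only substantive obstacle is the coordinate computation in $\EApartment$: once one sees that the cancellation in $\VecTwo - \tfrac{\sum_i \VecTwo_i}{\sum_i \VecOne_i}\VecOne = \VecOT$ is exactly what removes the $\VecPerp$-contribution when pulling back $\BuseTwo$ through the inverse of $\Project|_{(\VecOne)^\perp}$, everything else is formal. Indeed this cancellation is precisely the motivation for defining $\VecOT$ in that specific form.
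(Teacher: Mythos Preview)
Your approach is correct but differs from the paper's. The paper does not use the apartment or the retraction $\Retraction$ at all for this lemma; instead it proves a purely $\CAT(0)$ statement (Proposition~\ref{prop:generalized_identification}) valid for any $\CAT(0)$-space of the form $\Building \times \ExtendingFactor$: writing points as $(\Point,r)$, one checks in a flat half-plane that $\BuseOne(\Point,r)-\BuseOne(\Point,s)=\cos\Angle_1\,(r-s)$ with $\Angle_1=\angle(\PAIOne,\infty)$, which immediately gives the explicit inverse of $\Project|_{\HyperOne}$. For the horoball claim the paper forms directly the combination $\BuseOT=\BuseTwo-\tfrac{\cos\Angle_2}{\cos\Angle_1}\BuseOne$, observes that it is a positive combination (hence a rescaled Busemann function centered on $[\PAIOne,\PAITwo]$) and that the $\ExtendingFactor$-dependence cancels (hence the center lies in $\partial\Building$); on $\HyperOne$ one has $\BuseOne=0$, so $\BuseTwo=\BuseOT$ there.

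Your route---do the linear computation in $\EApartment$ and then push it to $\EBuilding$ via $\BuseNum{j}=\BuseNum{j}\circ\Retraction$ and the compatibility $\Project\circ\Retraction=\Retraction'\circ\Project$---works as well, and makes the role of the definition of $\VecOT$ very transparent. Two small points to tidy up: you write $\VecOne,\VecTwo\in\Cone$, but only $\VecOne,\VecTwo\in\ECone$ holds (neither need lie in $\Apartment$); what you actually use is that their nonnegative combination $\VecOT$ lies in $\ECone\cap\Apartment=\Cone$, so $\PAIOT$ lies in the fundamental chamber of $\partial\Apartment$. Also, the identity $\Project\circ\Retraction=\Retraction'\circ\Project$ (where $\Retraction'$ is the retraction of $\Building$ onto $\Apartment$ centered at $\Chamber$) deserves one line of justification---it follows because $\EBuilding=\Building\times\ExtendingFactor$, $\EApartment=\Apartment\times\ExtendingFactor$ and $\Retraction=\Retraction'\times\id_\ExtendingFactor$. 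The paper's argument buys generality (no building structure needed) and avoids this extra compatibility check; yours buys a more concrete computation that stays close to the coordinate description of $\VecOT$.
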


We prove a more general statement:

\begin{prop}
\label{prop:generalized_identification}
Let $\EBuilding$ be a $\CAT(0)$-space that decomposes as $\EBuilding = \Building \times \ExtendingFactor$ where $\ExtendingFactor$ is a euclidean line. Let $\Project \colon \EBuilding \to \Building$ be the projection onto the first factor. Let $\PAIOne \in \partial \EBuilding \setminus \partial \Building$ and let $\HyperOne$ be a horosphere centered at $\PAIOne$. The restriction $\Project|_{\HyperOne}$ is a homeomorphism.

Moreover, if $\PAITwo \in \partial \EBuilding$ is such that $\angle(\PAIOne,\PAITwo) \ne \pi$ and that the unique geodesic $[\PAIOne,\PAITwo]$ meets $\partial \Building$ in a point $\PAIOT$, then $\Project|_{\HyperOne}$ takes horoballs around $\PAITwo$ to horoballs around $\PAIOT$.
\end{prop}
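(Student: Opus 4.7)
The plan is to pass to the product decomposition of Busemann functions on the $\CAT(0)$-product $\EBuilding = \Building \times \ExtendingFactor$. A unit speed geodesic ray representing a boundary point $\PAIOne$ splits as a pair $(\bar\gamma,\bar\ell)$, where $\bar\gamma$ is a (possibly constant) geodesic in $\Building$ of speed $\cos\alpha$ and $\bar\ell$ is a geodesic in $\ExtendingFactor$ of speed $\sin\alpha$; here $\alpha \in [0,\pi/2]$ is the angular position of $\PAIOne$ in the spherical join $\partial\EBuilding = \partial\Building * \partial\ExtendingFactor$, and $\PAIOne \notin \partial\Building$ translates exactly into $\sin\alpha > 0$. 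Correspondingly, the Busemann function decomposes as
\[
\BuseOne(x,r) = \cos\alpha \cdot \bar\beta_1(x) \pm \sin\alpha \cdot (r_0 - r),
\]
where $\bar\beta_1$ is a Busemann function on $\Building$ centered at the projection $\PAIOne_B \in \partial\Building$ of $\PAIOne$ (understood to vanish in the degenerate case $\alpha = \pi/2$), with the sign dictated by the direction of $\bar\ell$.

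For the first assertion, solving $\BuseOne = c$ for $r$ exhibits $\HyperOne$ as the graph of a continuous function $\Building \to \ExtendingFactor$, so $\Project|_{\HyperOne}$ is a homeomorphism with continuous inverse. For the second assertion I proceed analogously for $\BuseTwo$ with angle $\beta$ and $\Building$-component $\bar\beta_2$, and substitute the graph formula on $\HyperOne$. The hypothesis that $[\PAIOne,\PAITwo]$ meets $\partial\Building$ forces the $\ExtendingFactor$-parts of $\PAIOne$ and $\PAITwo$ to point in opposite directions, and a short computation yields
\[
\BuseTwo|_{\HyperOne}(x) = A\,\bar\beta_2(x) + B\,\bar\beta_1(x) + \mathrm{const}
\]
with explicit $A,B > 0$ depending on $\alpha,\beta$. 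The image under $\Project$ of a horoball around $\PAITwo$ intersected with $\HyperOne$ is therefore a sublevel set of $A\bar\beta_2 + B\bar\beta_1$ in $\Building$.

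It remains to identify this sublevel set with a horoball around $\PAIOT$, i.e.\ to show $A\bar\beta_2 + B\bar\beta_1 = C\,\BuseOT + \mathrm{const}$ for some $C>0$. This is the main obstacle. In a Euclidean flat, Busemann functions are affine and the identity reduces to the spherical-trigonometric statement that $\PAIOT$ is the equator crossing of the great arc from $\PAIOne$ to $\PAITwo$ in $\partial\EBuilding$, yielding $C = \sqrt{A^2 + 2AB\cos\angle(\PAIOne_B,\PAITwo_B) + B^2}$. For the full $\CAT(0)$-statement one must exhibit a flat in $\Building$ containing geodesic rays to $\PAIOne_B,\PAITwo_B$ and $\PAIOT$ from a common basepoint; the assumption that the geodesic $[\PAIOne,\PAITwo]$ actually meets $\partial\Building$ at $\PAIOT$ places these three boundary points on a common circle in $\partial\EBuilding$, and in the Bruhat--Tits setting of the paper standard apartment theory provides such a flat, whence the identity follows globally on $\Building$.
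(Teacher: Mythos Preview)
Your approach coincides with the paper's: both exploit the product decomposition of Busemann functions on $X^1 = X \times L$ to exhibit $H^1$ as a graph over $X$, and both reduce the second assertion to identifying a positive linear combination of Busemann functions as a scaled Busemann function centered at $\xi$.

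The paper organises that last step slightly differently and more economically. Rather than projecting to $X$ and working with $\bar\beta_1,\bar\beta_2$ centered at the projections $\xi^1_B,\xi^2_B$, it forms $\beta := \beta^2 - (\cos\theta_2/\cos\theta_1)\,\beta^1$ upstairs on $X^1$, observes that the coefficient is positive (this is exactly your observation that the $L$-components point in opposite directions), asserts that $\beta$ is therefore up to scaling a Busemann function centered on the Tits segment $[\xi^1,\xi^2] \subset \partial X^1$, and then checks that $\beta$ is constant in the $L$-coordinate, forcing its center to be the unique point of $[\xi^1,\xi^2]\cap\partial X$, namely $\xi$. On $H^1$ one has $\beta^1=0$, so $\beta^2=\beta$ there, and the claim follows. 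This bypasses the spherical trigonometry relating $\xi^1_B,\xi^2_B,\xi$ that you set up.

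What you call ``the main obstacle'' is, however, a genuine issue in \emph{both} arguments. The assertion that a positive combination of Busemann functions is again a scaled Busemann function fails in general $\CAT(0)$ spaces: if $X$ is a Euclidean cone of total angle $L>2\pi$, then $\beta_{-\epsilon}+\beta_{\epsilon}$ is not proportional to any Busemann function on $X$ (compare its behaviour on the sector $\lvert\theta\rvert<\pi-\epsilon$, where it equals $-2r\cos\theta\cos\epsilon$, with the sector $\pi+\epsilon<\lvert\theta\rvert<L-\pi-\epsilon$, where it equals $2r$), and one checks that the proposition itself fails in $X\times\mathbb{R}$ for this $X$. The paper's ``therefore'' thus tacitly uses the extra hypothesis you supply: that $\xi^1_B,\xi^2_B$---and hence $\xi$, which always lies on the Tits arc between them---are contained in the boundary of a common flat in $X$. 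In a Euclidean building this is guaranteed by the apartment axioms, so both your argument and the paper's are complete for every application made of the proposition; your explicit retreat to the Bruhat--Tits setting is not a defect but a necessary qualification that the paper glosses over.
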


\begin{proof}
We identify $\ExtendingFactor$ with $\R$ and write elements of $\EBuilding$ as pairs $(\Point,r)$ with $\Point \in \Building$ and $r \in \R$. We also let  $\infty$ and $-\infty$ denote the endpoints of $\ExtendingFactor$. Let $\BuseOne$ be the Busemann function centered at $\PAIOne$ so that $\HyperOne = (\BuseOne)^{-1}(0)$. For $\Point \in \Building$ we may consider the euclidean half-plane spanned by the geodesic ray $\Project([\Point,\PAIOne])$ and the line $\ExtendingFactor$. In that half-plane it is easy to verify that
\begin{equation}
\label{eq:busemann_difference}
\BuseOne(x,r) - \BuseOne(x,s) = \cos \Angle_1 (r-s)
\end{equation}
where $\Angle_1 = \angle(\PAIOne,\infty)$ (see Figure~\ref{fig:busemann}). From this it follows that $\Project|_{\HyperOne}$ is a homeomorphism with inverse
\[
x \mapsto \Big(x,-\frac{1}{\cos \Angle_1} \BuseOne(x,0)\Big) \text{ .}
\]

\begin{figure}
\begin{center}
\includegraphics{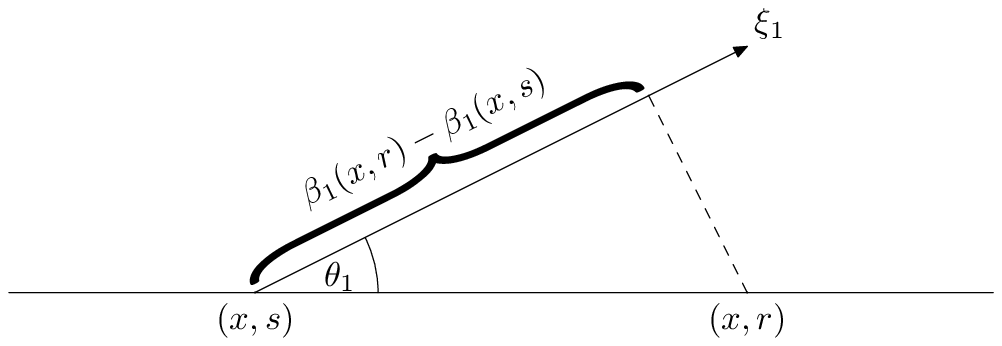}
\end{center}
\caption{$\protect\BuseOne(x,r) - \protect\BuseOne(x,s) = \cos \Angle_1 (r-s)$}
\label{fig:busemann}
\end{figure}

For the second statement set $\Angle_2 = \angle(\PAITwo,\infty)$ and observe that \eqref{eq:busemann_difference} holds
analogously. We define $\BuseOT \defeq \BuseTwo - (\cos \Angle_2)/(\cos \Angle_1) \BuseOne$. Note that this is a
positive combination of $\BuseOne$ and $\BuseTwo$ by the assumption that $[\xi_1,\xi_2] \intersect \partial \Building
\ne \emptyset$. Therefore it is up to scaling a Busemann function centered at a point in $[\PAIOne,\PAITwo]$. Moreover,
\begin{align*}
\BuseOT(x,r) - \BuseOT(x,s) & = \BuseTwo(x,r) - \BuseTwo(x,s) - \frac{\cos \Angle_2}{\cos \Angle_1} (\BuseOne(x,r) - \BuseOne(x,s))\\
& = \cos \Angle_2 (r-s) - \frac{\cos\Angle_2}{\cos \Angle_1} \cos \Angle_1 (r-s)\\
& = 0
\end{align*}
hence $\BuseOT$ is centered at $\PAIOT$ and we may in particular regard it as a reparametrized Busemann function on $\Building$. For $(x,r) \in \EBuilding$ with $\BuseOne(x,r) = 0$ we clearly have $\BuseTwo(x,r) = \BuseOT(x,r)$ which shows the second claim.
\end{proof}

%

Our next goal is to show that the action of $\DGroup$ on $\HyperOne \intersect \HyperTwo$ is cocompact. The first step is the following consequence of the cocompactness result of \cite{abebro87}.

\begin{prop}
\label{prop:borel_cover}
The building $\EBuilding$ is covered by translates of $\EApartment$ under $\BorelScheme(\Z[1/p])$. In short, $\BorelScheme(\Z[1/p]).\EApartment = \EBuilding$.
\end{prop}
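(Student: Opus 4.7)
The plan is to prove the stronger statement $\UniScheme(\Z[1/p]) \cdot \EApartment = \EBuilding$ by a density argument; the proposition follows at once from the inclusion $\UniScheme(\Z[1/p]) \subseteq \BorelScheme(\Z[1/p])$. This route sidesteps the cocompactness result of~\cite{abebro87} and proceeds by elementary means.

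First, I would appeal to standard Bruhat--Tits building theory to decompose $\EBuilding = \Uni \cdot \EApartment$. Every point $x \in \EBuilding$ lies on an apartment containing $\EChamber$ as a chamber at infinity: take a Weyl chamber based at $x$ in the parallel class of $\EChamber$ and complete it to an apartment. The subgroup $\Borel$ stabilizes $\EChamber$, and its setwise stabilizer of $\EApartment$ equals $\Torus$ (since the intersection $N_G(T) \cap \Borel$ consists of upper-triangular monomial matrices, i.e., of diagonal matrices). Hence $\Uni \isom \Borel/\Torus$ acts simply transitively on apartments with $\EChamber$ at infinity, and there exists $u \in \Uni$ with $y \defeq u^{-1} x \in \EApartment$.

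Second, I would replace $u$ by an element of $\UniScheme(\Z[1/p])$ using density. The pointwise stabilizer of $y$ in $\GL_{n+1}(\Q_p)$ is a finite intersection of parahoric-type subgroups, hence open (and compact); intersecting with $\Uni$ yields an open subgroup $S \defeq \operatorname{Stab}_\Uni(y)$ of $\Uni$. Viewing $\UniScheme$ as affine space over $\Z$ via its strict upper triangular entries, $\UniScheme(\Z[1/p])$ is $p$-adically dense in $\UniScheme(\Q_p) = \Uni$ because $\Z[1/p]$ is dense in $\Q_p$. The open coset $u \cdot S$ therefore meets $\UniScheme(\Z[1/p])$; picking $u'$ in the intersection gives $u^{-1} u' \in S$, so $u'^{-1} x = (u^{-1} u')^{-1} y = y \in \EApartment$, i.e.\ $x \in u' \EApartment \subseteq \UniScheme(\Z[1/p]) \cdot \EApartment$.

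The main obstacle is the first step: one needs the building-theoretic facts that every point of $\EBuilding$ lies on some apartment containing $\EChamber$ in its boundary, and that $\Uni$ acts simply transitively on such apartments. Both are standard in the theory of euclidean buildings (see \cite{abrbro}) but deserve a careful citation in the extended-building setting $\EBuilding = \Building \times \ExtendingFactor$. Once those are in hand, the density step is routine, since openness of cell stabilizers and $p$-adic density of $\Z[1/p]$ in $\Q_p$ are both immediate.
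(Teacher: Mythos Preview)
Your proof is correct and takes a genuinely different route from the paper's. The paper invokes \cite[Proposition~2.1(b)]{abebro87}, which asserts that $\BorelScheme(\Z[1/p])$ is transitive on lattices in $\Q_p^{n+1}$ and hence on vertices of $\EBuilding$; it then upgrades vertex-transitivity to chamber-transitivity by a ray argument: choose a ray from a vertex to $\EChamber$ passing through the given chamber, compare it with its image under $\Retraction$, and use the vertex-transitivity to align the two initial points. Your argument instead first uses that $\Uni$ is transitive on apartments containing $\EChamber$ at infinity, and then passes from $\Uni$ to $\UniScheme(\Z[1/p])$ by $p$-adic density of $\Z[1/p]$ in $\Q_p$ together with openness of point stabilizers.

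What your approach buys: it is self-contained (no appeal to \cite{abebro87} at this point), and it actually yields the sharper statement $\UniScheme(\Z[1/p]) \cdot \EApartment = \EBuilding$, which is precisely what the proof of Lemma~\ref{lem:cocompactness} wants---there an element $b \in \BorelScheme(\Z[1/p])$ is corrected by a torus element so that $sb \in \UniScheme(\Z[1/p])$, a step your version makes superfluous. The density argument also generalizes at once to other split groups via strong approximation for the unipotent radical. What the paper's approach buys: it ties directly into the lattice model of Appendix~\ref{sec:extended_simplicial} and reuses a result already needed elsewhere. Your caveat about the extended-building setting is easily dispatched: elements of $\Uni$ have determinant~$1$ and by \eqref{eq:extended_action} act trivially on the $\ExtendingFactor$ factor, so both building-theoretic facts reduce to their standard counterparts for $\Building$.
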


\begin{proof}
By \cite[Proposition~2.1~(b)]{abebro87} $\BorelScheme(\Z[1/p])$ acts transitively on the lattices in $\Q_p^{n+1}$ which by Appendix~\ref{sec:extended_simplicial} correspond to the vertices of $\EBuilding$. Let $\InnerChamber$ be a chamber of $\EBuilding$ and let $\Ray$ be a geodesic ray that starts in a vertex, ends in $\EChamber$ and meets the interior of $\InnerChamber$ at a time $t$. Let $\Ray' = \Retraction \circ \Ray$, which is also a geodesic ray because $\Ray$ ends in $\EChamber$ and $\Retraction$ is centered at $\EChamber$. Let $\AltInnerChamber \subseteq \EApartment$ be the chamber that contains $\Ray'(t)$. Let $g \in \BorelScheme(\Z[1/p])$ be such that $g.\Ray'(0) = \Ray(0)$. Since $\Borel$ fixes $\EChamber$ it follows that  $g \circ \Ray' = \Ray$ and in particular $g.\AltInnerChamber = \InnerChamber$.
\end{proof}

Now cocompactness of $\DGroup$ follows using that $\VecOne$ and $\VecTwo$ lie in $\Z^n$.

\begin{lem}
\label{lem:cocompactness}
\begin{enumerate}
\item $\DGroup \intersect \Torus$ acts cocompactly on $(\VecOne)^\perp \intersect (\VecTwo)^\perp$.
\item $\DGroup$ acts cocompactly on $\HyperOne \intersect \HyperTwo$.
\end{enumerate}
\end{lem}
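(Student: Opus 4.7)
For (i) the plan is to identify the image of $\DGroup\intersect\Torus$ under the valuation map $\diag(d_i)\mapsto(\Valuation(d_i))$ with the full-rank lattice $\Z^{n+1}\intersect Y_0$ in $Y_0:=(\VecOne)^\perp\intersect(\VecTwo)^\perp$. Writing $\Z[1/p]^\times=\{\pm p^k:k\in\Z\}$, the defining condition $\prod d_i^{\VecNum{j}_i}=1$ splits into a valuation part $\sum\VecNum{j}_i\Valuation(d_i)=0$ and a sign part which is vacuously satisfied if we take all $d_i=p^{k_i}$; so the image of $\DGroup\intersect\Torus$ is exactly $\Z^{n+1}\intersect Y_0$, a full-rank lattice in $Y_0$ since $\VecOne,\VecTwo\in\Z^{n+1}$. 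As $\Torus$ acts on $\EApartment\cong\R^{n+1}$ by translations via~\eqref{eq:torus_action}, part~(i) follows.

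For (ii) the plan is to reduce to (i) by combining Proposition~\ref{prop:borel_cover} with the Borel decomposition $\BorelScheme=\TorusScheme\ltimes\UniScheme$. Given $\Point\in\HyperOne\intersect\HyperTwo$, Proposition~\ref{prop:borel_cover} writes $\Point=b.y$ with $b\in\BorelScheme(\Z[1/p])$ and $y\in\EApartment$; since $\TorusScheme(\Z[1/p])$ already acts cocompactly on $\EApartment$ (by the argument of (i) without the $\VecOne,\VecTwo$ constraints), we may replace $b$ by a $\TorusScheme(\Z[1/p])$-translate and assume $y\in F$ for a fixed compact $F\subset\EApartment$. Decompose $b=tu$ with $t=\GroupRetraction(b)\in\TorusScheme(\Z[1/p])$ and $u=t^{-1}b\in\UniScheme(\Z[1/p])$. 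By~\eqref{eq:retraction_relation}, $\Retraction(\Point)=t.y$ lies in $Y_0$, so (i) yields $\sigma\in\DGroup\intersect\Torus$ with $(\sigma^{-1}t).y\in K_0$ for a fixed compact $K_0\subseteq Y_0$.

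The element I would use is $\gamma:=(tut^{-1})\sigma$. It lies in $\DGroup$ because conjugation by $t$ rescales each entry $u_{ij}$ by $t_it_j^{-1}\in\Z[1/p]^\times$, placing $tut^{-1}$ in $\UniScheme(\Z[1/p])\subseteq\DGroup$, while $\sigma\in\DGroup\intersect\Torus$. A short direct cancellation
\[
\gamma^{-1}.\Point=(\sigma^{-1}tu^{-1}t^{-1})(tu.y)=\sigma^{-1}t.y\in K_0
\]
puts $\gamma^{-1}.\Point$ into the compact set $K_0\subseteq\HyperOne\intersect\HyperTwo$, giving $\DGroup.K_0=\HyperOne\intersect\HyperTwo$. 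Nothing looks delicate once (i) is in place; the only items worth checking are that $b=tu$ and $tut^{-1}$ remain over $\Z[1/p]$, both immediate from closure of $\Z[1/p]^\times$ under inversion.
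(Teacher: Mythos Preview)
Your proof is correct and follows essentially the same route as the paper: both arguments use Proposition~\ref{prop:borel_cover} to reach the apartment, split the Borel element into its torus and unipotent parts, and then invoke part~(i) together with $\UniScheme(\Z[1/p])\subseteq\DGroup$ to land in a fixed compact subset of $(\VecOne)^\perp\cap(\VecTwo)^\perp$. The intermediate step where you arrange $y\in F$ via the cocompact $\TorusScheme(\Z[1/p])$-action on $\EApartment$ is never used downstream and can simply be dropped.
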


\begin{proof}
For the first part note that $\TorusScheme(\Z[1/p])$ acts on $\EApartment$ through $\Z^n$ and the intersection $\DGroup \intersect \Torus$ acts as the stabilizer in $\Z^n$ of $(\VecOne)^\perp \intersect (\VecTwo)^\perp$. Since $\VecOne$ and $\VecTwo$ lie in $\Z^n$, the $\Z$-module $(\VecOne)^\perp \intersect (\VecTwo)^\perp \intersect \Z^n$ has rank $n-2$, so the stabilizer acts cocompactly.

Now let $K \subseteq \EApartment$ be compact such that its translates under $\DGroup$ cover $(\VecOne)^\perp \intersect (\VecTwo)^\perp$ and let $\Point \in \HyperOne \intersect \HyperTwo$ be arbitrary. By Proposition~\ref{prop:borel_cover} there is a $b \in \BorelScheme(\Z[1/p])$ such that $b.x \in \EApartment$. Clearly there is an $s \in \TorusScheme(\Z[1/p])$ such that $sb \in \UniScheme(\Z[1/p])$. But then we necessarily have $sb.x \in \HyperOne \intersect \HyperTwo \intersect \EApartment = (\VecOne)^\perp \intersect (\VecTwo)^\perp$. Therefore, by the first part, there is a $t \in \DGroup \intersect \Torus$ such that $tsb.x \in K$. Since $tsb \in \DGroup$ this closes the proof.
\end{proof}

Since $\EBuilding$ is locally compact we get immediately:

\begin{cor}
\label{cor:cocompact}
For every $s > 0$ the action of $\DGroup$ on $(\BuseTwo)^{-1}([0,s]) \intersect \HyperOne$ is cocompact.
\end{cor}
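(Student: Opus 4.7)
The strategy is to combine the cocompact action of $\DGroup$ on the horosphere $\HyperOne \intersect \HyperTwo$ coming from Lemma~\ref{lem:cocompactness}(ii) with the observation that the slab $(\BuseTwo)^{-1}([0,s]) \intersect \HyperOne$ lies in a bounded $\EBuilding$-neighborhood of this horosphere. Since $\EBuilding$ is a complete locally compact $\CAT(0)$-space it is proper (Hopf--Rinow), so a closed bounded neighborhood of a compact fundamental domain for $\HyperOne \intersect \HyperTwo$ is again compact and will serve as a fundamental domain for the slab.

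I would first verify $\DGroup$-invariance of the slab. Any $g \in \DGroup$ decomposes as $g = tu$ with $t \in \DGroup \intersect \Torus$ and $u \in \UniScheme(\Z[1/p])$. The unipotent factor $u$ preserves every Busemann function centered in $\EChamber$ by the observation at the end of Section~\ref{sec:buildings}; and $t$ acts on $\EApartment$ by the translation $v = (\Valuation(d_i))_i$ of \eqref{eq:torus_action}, where $\scaprod{\VecTwo}{v} = 0$ by the defining relation $\prod_i d_i^{\VecTwo_i} = 1$, so $t$ preserves the affine function $\BuseTwo|_{\EApartment}$ and, via $\BuseTwo = \BuseTwo \circ \Retraction$ together with \eqref{eq:retraction_relation}, preserves $\BuseTwo$ on all of $\EBuilding$. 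In particular $\DGroup$ preserves the slab.

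For the bounded-neighborhood claim I would use the identification $\Project|_{\HyperOne}\colon \HyperOne \to \Building$ of Lemma~\ref{lem:identification}; the explicit inverse $x \mapsto (x, -\BuseOne(x,0)/\cos\Angle_1)$ read off from the proof of Proposition~\ref{prop:generalized_identification} shows it is bi-Lipschitz. Under it, $\HyperOne \intersect \HyperTwo$ corresponds to the horosphere $\BuseOT^{-1}(0) \subset \Building$ and the slab to $\BuseOT^{-1}([0,s'])$ for some $s' > 0$. In the $\CAT(0)$-space $\Building$ the geodesic ray from any $y$ with $\BuseOT(y) = r \in [0,s']$ toward $\PAIOT$ reaches $\BuseOT^{-1}(0)$ after arc length $r$, so $\BuseOT^{-1}([0,s'])$ is contained in the $s'$-neighborhood of $\BuseOT^{-1}(0)$; pulling back yields a constant $C > 0$ with $(\BuseTwo)^{-1}([0,s]) \intersect \HyperOne$ contained in the $C$-neighborhood of $\HyperOne \intersect \HyperTwo$ inside $\EBuilding$.

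To conclude, pick a compact $K_0 \subseteq \HyperOne \intersect \HyperTwo$ with $\DGroup.K_0 = \HyperOne \intersect \HyperTwo$. The set
\[
K \defeq \overline{\{y \in \EBuilding : d(y,K_0) \le C\}} \intersect (\BuseTwo)^{-1}([0,s]) \intersect \HyperOne
\]
is closed and bounded in the proper space $\EBuilding$, hence compact, and the bounded-neighborhood claim forces $\DGroup.K$ to cover the whole slab. The only step that does real work is the bounded-neighborhood claim, and it is precisely there that local compactness of $\EBuilding$ enters the argument.
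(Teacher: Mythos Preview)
Your argument is correct and is exactly what the paper means by ``Since $\EBuilding$ is locally compact we get immediately'': you have simply unpacked the implicit steps (invariance of the slab, bounded distance from $\HyperOne \intersect \HyperTwo$ via Lemma~\ref{lem:identification}, and properness of $\EBuilding$). One small quibble: local compactness is used not in establishing the bounded-neighborhood claim itself, but in the final step where the closed bounded set $K$ is declared compact; also, $\DGroup$-invariance of $\BuseTwo$ is already available from the Observation preceding Lemma~\ref{lem:cocompactness}, so your first paragraph, while correct, re-derives something the paper has in hand.
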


The connectivity of horospheres in euclidean buildings has been established by Kai-Uwe Bux and Kevin Wortman \cite{buxwor11}:

\begin{thm}
\label{thm:horoball_connectivity}
Let $\Building$ be a thick euclidean building and $\AltPointAtInfinity \in \partial \Building$. Let $\Busemann$ be a Busemann function centered at $\AltPointAtInfinity$. Let $\Building_0$ be the least factor of $\Building$ such that $\AltPointAtInfinity \in \partial \Building_0$ and let $m$ be its dimension. Then for $r \le s$ the set $\Busemann^{-1}([r,s])$ is $(m-2)$-connected.
Moreover there is a $t \ge s$ such that the map
\[
\pi_{k-1}(\Busemann^{-1}([r,s])) \to \pi_{k-1}(\Busemann^{-1}([r,t]))
\]
is not injective.
In particular $\Busemann^{-1}([r,s])$ is not $(m-1)$-connected.
\end{thm}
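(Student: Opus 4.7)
The plan is to follow the Bux--Wortman approach via a Morse-theoretic filtration. First I would reduce to the case $\Building = \Building_0$. If $\Building = \Building_0 \times \Building'$ is a product decomposition with $\AltPointAtInfinity \in \partial \Building_0$ and $\Building'$ a complementary $\CAT(0)$ factor, then the Busemann function satisfies $\Busemann(\Point_0,\Point') = \Busemann_0(\Point_0)$ for the associated Busemann function on $\Building_0$. Hence $\Busemann^{-1}([r,s]) = \Busemann_0^{-1}([r,s]) \times \Building'$, and since $\Building'$ is contractible the connectivity question reduces to the same question on $\Building_0$.

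For the $(m-2)$-connectivity of the slab, the idea is to use $\Busemann$ as a Morse function on $\Building$. After a small generic perturbation (for example a secondary height function breaking ties on vertices lying on walls of apartments containing $\AltPointAtInfinity$), one obtains a PL Morse function in the sense of Bestvina--Brady. The slab $\Busemann^{-1}([r,s])$ can then be built from the horosphere $\Busemann^{-1}(\{r\})$ by attaching cells indexed by vertices of Busemann value in $(r,s]$ along their descending links. At each vertex $\Point$ the relevant descending link is carved out of the spherical link $\Lk(\Point,\Building)$ by the gradient direction of $\Busemann$, i.e.\ by the class of $\AltPointAtInfinity$ viewed as a direction in $\Lk(\Point,\Building)$. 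The crucial geometric input is that this link is a thick spherical building of spherical dimension $m-1$, and the hypothesis that $\AltPointAtInfinity$ does not lie in the boundary of any proper factor of $\Building_0$ translates into the gradient direction not lying in any proper spherical join factor of the link. A standard hemisphere-complex connectivity lemma for thick spherical buildings (ultimately resting on Solomon--Tits) then gives that each descending link is $(m-2)$-connected, and standard Morse-theoretic bookkeeping concludes.

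For the non-injectivity in $\pi_{m-1}$, the plan is to exhibit explicit spherical cycles. Pick an apartment $\Apartment \subseteq \Building$ whose boundary contains $\AltPointAtInfinity$; on $\Apartment \cong \R^m$ the Busemann function $\Busemann$ restricts to an affine function of unit gradient, so $\Apartment \cap \Busemann^{-1}([r,s])$ is a Euclidean slab isometric to $\R^{m-1} \times [0,s-r]$. One constructs a large $(m-1)$-sphere in this slab by slicing a big round $m$-ball in $\Apartment$ centered on $\Apartment \cap \Busemann^{-1}(\{r\})$. Inside $\Apartment$ itself, filling such a sphere by a disc forces $\Busemann$-values of the order of the sphere's diameter, but thickness provides opposite apartments meeting $\Apartment$ along a half-apartment in which the same sphere becomes nullhomotopic at a controllable but still large Busemann value. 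Letting the radius grow extracts, for each $s$, a sphere in $\Busemann^{-1}([r,s])$ that only becomes null-homotopic in $\Busemann^{-1}([r,t])$ for some $t > s$, which proves the stated non-injectivity and hence the failure of $(m-1)$-connectivity.

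The main technical obstacle is the descending-link analysis: one must identify at each vertex the precise subcomplex of its spherical link cut out by $\Busemann$ and show that it is $(m-2)$-connected but dually \emph{not} $(m-1)$-connected. Both statements rely on the minimal-factor hypothesis on $\Building_0$ to guarantee that the cutting direction $\AltPointAtInfinity$ does not sit in a proper spherical join factor of the link, so that the relevant hemisphere complex in the thick spherical building attains precisely the optimal Solomon--Tits-type connectivity.
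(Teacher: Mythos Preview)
Your reduction to $\Building = \Building_0$ and your plan for the $(m{-}2)$-connectivity are essentially the same as what the paper does: the paper cites Bux--Wortman directly, but the content of their argument is exactly the Morse-theoretic descending-link analysis you describe, with the hemisphere-complex connectivity in thick spherical links supplied by Schulz's result. The only extra wrinkle the paper notes is that Bux--Wortman state their theorem for $\Busemann^{-1}((-\infty,s])$, and one passes to $\Busemann^{-1}([r,s])$ by observing that the horoball $\Busemann^{-1}((-\infty,r])$ is convex, so the slab is a deformation retract of the half-space.

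The non-injectivity argument, however, has a genuine gap. You construct an $(m{-}1)$-sphere in an apartment slab and then argue that it becomes null-homotopic only for large $t$. But you never show that the sphere is \emph{nontrivial} in $\pi_{m-1}(\Busemann^{-1}([r,s]))$. Your claim that ``filling such a sphere by a disc forces $\Busemann$-values of the order of the sphere's diameter'' is only an assertion about fillings inside the single apartment $\Apartment$; it says nothing about fillings in the whole slab $\Busemann^{-1}([r,s])$, which is what you need. In fact a sphere lying in $\Apartment \cap \Busemann^{-1}(\{r\}) \cong \R^{m-1}$ already bounds a flat disc inside that very hyperplane, so the construction as written does not even produce a candidate nontrivial class.

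The paper obtains non-injectivity differently, staying entirely within the Morse framework you already set up for the positive part. One chooses a barycenter (in the Bux--Wortman sense) of $\Busemann$-height above $s$ whose factors are special vertices; the descending link is then an \emph{open} hemisphere complex in a thick spherical building. Schulz's Theorem~B says such a complex is not contractible (this is where the minimal-factor hypothesis on $\Building_0$, equivalently the non-join condition on the direction $\AltPointAtInfinity$, is used). Standard Morse theory then says that crossing such a critical value attaches cells along a non-contractible $(m{-}2)$-connected link, which kills a nonzero element of $\pi_{m-1}$; this is precisely the non-injectivity of $\pi_{m-1}(\Busemann^{-1}([r,s])) \to \pi_{m-1}(\Busemann^{-1}([r,t]))$. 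So the same descending-link analysis that gives the positive statement, read at one degree higher and with the opposite sign of Schulz's theorem, gives the negative one.
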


Since this is slightly stronger than \cite[Theorem~7.7]{buxwor11}, we briefly sketch how their machinery gives our statement.

\begin{proof}[Proof sketch.]
Let $\Busemann$ be a Busemann function centered at $\AltPointAtInfinity$. In general, if $X = X_0 \times X_1$ with $\AltPointAtInfinity \in X_0$, then $\Busemann$ is constant on $\{x\} \times X_1$ for every $x \in X_0$. That is, $\Busemann^{-1}([r,s]) = \Busemann|_{X_0}^{-1}([r,s]) \times X_1$. Since $X_1$ is contractible this shows in particular that $\Busemann^{-1}([r,s])$ and $\Busemann|_{X_0}^{-1}([r,s])$ are homotopy equivalent. So we may assume that $X_0 = X$.

Now the statement that $\Busemann^{-1}([r,s])$ is $(m-2)$-connected is almost \cite[Theorem~7.7]{buxwor11}, except that there it is stated for $\Busemann^{-1}((-\infty,s])$. But $\Busemann$ is a concave function, so horoballs are convex and $\Busemann^{-1}([r,s])$ is a deformation retract of $\Busemann^{-1}((-\infty,s])$.

It remains to verify that $\pi_{k-1}(\Busemann^{-1}([r,s])) \to \pi_{k-1}(\Busemann^{-1}([r,t]))$ is not injective for sufficiently large $t$. In the language of \cite{buxwor11} this requires showing that there is a barycenter $(\mathring{\tau_1},\ldots,\mathring{\tau_n})$ of $\Busemann$-height greater than $r$ so that $\Lk^\downarrow(\mathring{\tau_1},\ldots,\mathring{\tau_n})$ is not $(m-1)$-connected. But we can take each $\tau_i$ to be a special vertex of the corresponding factor. Then $\Lk^\downarrow(\mathring{\tau_i})$ is an open hemisphere complex in an irreducible, thick spherical building. These are not contractible by \cite[Theorem~B]{schulz10}, cf.\ the proof of Lemma~6.6 in \cite{buxwor11}. Therefore $\Lk^\downarrow(\mathring{\tau_1},\ldots,\mathring{\tau_n})$ is not $(m-2)$-connected which gives the desired statement.
\end{proof}

With these preparations in place it is a routine matter to prove the first part of the Main Theorem:

\begin{thm}
\label{thm:classical_finiteness_length}
The group $\DGroup = \AbelsScheme(\Z[1/p])$ is of type $\ClassicalF_{n-1}$ but not of type $\ClassicalFP_n$.
\end{thm}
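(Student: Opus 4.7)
The plan is to apply Brown's criterion (Theorem~\ref{thm:browns_criterion}) to the action of $\DGroup$ on the horosphere $\HyperOne$, which by Lemma~\ref{lem:identification} is homeomorphic to the thick, irreducible, Euclidean building $\Building$ of type $\tilde A_n$, and in particular is contractible. I would filter $\HyperOne$ by the subspaces $\FiltrationSet_s \defeq \HyperOne \intersect \BuseTwo^{-1}([-s,s])$ for $s \in \N$, which exhaust $\HyperOne$ as $s \to \infty$.

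First I would check the auxiliary hypotheses. Because $\DGroup \subseteq \Borel$ and $\Borel$ fixes $\EChamber$ pointwise (being generated by $\Torus$, which acts trivially on $\partial \EApartment$, and $\Uni$, which fixes $\ECone$), every $g \in \DGroup$ fixes $\PAITwo$; combined with the $\DGroup$-invariance of $\HyperTwo = \BuseTwo^{-1}(0)$, this yields $\BuseTwo \circ g = \BuseTwo$. Hence each $\FiltrationSet_s$ is $\DGroup$-invariant, and the two-sided analogue of Corollary~\ref{cor:cocompact} gives $\DGroup$-cocompactness. Cell stabilizers of $\DGroup$ acting on $\EBuilding$ intersect the compact-open $\GL_{n+1}(\Q_p)$-stabilizers to yield solvable arithmetic subgroups of $\AbelsScheme$, which are polycyclic and hence of type $\ClassicalF_\infty$.

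The central input is Theorem~\ref{thm:horoball_connectivity}: under the identification $\Project|_{\HyperOne}\colon \HyperOne \to \Building$, the function $\BuseTwo|_{\HyperOne}$ becomes, up to positive rescaling, the Busemann function $\BuseOT$ on $\Building$ centered at $\PAIOT$ (Lemma~\ref{lem:identification}), and each $\FiltrationSet_s$ is identified with a slab $\BuseOT^{-1}([-s',s'])$. Since $\Building$ is irreducible of dimension $n$ and $\PAIOT \in \partial \Building$, the theorem applies with $m = n$ and yields that every $\FiltrationSet_s$ is $(n-2)$-connected but not $(n-1)$-acyclic. The $(n-2)$-connectedness makes the essential-triviality hypothesis of Brown's criterion vacuous for $k < n-1$, proving type $\ClassicalF_{n-1}$.

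For the negative part I would invoke the homological version of Brown's criterion together with the non-injectivity half of Theorem~\ref{thm:horoball_connectivity}. The main obstacle is translating non-injectivity in $\pi_{n-1}$ into failure of essential triviality of the direct system $\tilde H_{n-1}(\FiltrationSet_s) \to \tilde H_{n-1}(\FiltrationSet_{s+1})$, since non-injectivity a priori only controls kernels while essential triviality concerns images. The standard device is that cocompactness with infinite polycyclic stabilizers permits $\tilde H_{n-1}(\FiltrationSet_s)$ to be infinitely generated as an abelian group, and that the Bux--Wortman hemisphere-complex machinery underlying Theorem~\ref{thm:horoball_connectivity} produces non-trivial $(n-1)$-cycles born in arbitrarily deep horoshells that persist in every larger $\FiltrationSet_t$. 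Brown's criterion then excludes type $\ClassicalFP_n$ and completes the proof.
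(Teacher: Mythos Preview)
Your approach is essentially the same as the paper's: apply Brown's criterion to the $\DGroup$-action on $\HyperOne$, use Lemma~\ref{lem:identification} to identify the filtration pieces with Busemann slabs in the irreducible building $\Building$, and invoke Theorem~\ref{thm:horoball_connectivity} for both the positive and negative connectivity statements. The only cosmetic difference is that the paper works inside the horoball $(\BuseTwo)^{-1}([0,\infty)) \cap \HyperOne$ with the one-sided filtration $\FiltrationSet_i = (\BuseTwo)^{-1}([0,i]) \cap \HyperOne$, whereas you take all of $\HyperOne$ with a two-sided filtration; both ambient spaces are contractible and both filtrations are cocompact, so this changes nothing.

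One remark on your last paragraph: you correctly note that ``non-injective'' is not literally the same as ``not essentially trivial,'' but your proposed fix is phrased backwards. A cycle that ``persists in every larger $\FiltrationSet_t$'' would survive to the direct limit $\tilde H_{n-1}(\FiltrationSet) = 0$, which is impossible. What the Bux--Wortman Morse theory actually gives is that at every height there are vertices whose descending links are not $(n-1)$-acyclic; combined with the $(n-2)$-connectedness of the slabs this produces, for each $s$ and each $t\ge s$, classes in $\tilde H_{n-1}(\FiltrationSet_s)$ that still map nontrivially into $\tilde H_{n-1}(\FiltrationSet_t)$, so no single $t$ kills all of $\tilde H_{n-1}(\FiltrationSet_s)$. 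The paper is equally terse here---it simply asserts that Theorem~\ref{thm:horoball_connectivity} implies the system $(\tilde H_{n-1}(\FiltrationSet_i))_i$ is not essentially trivial---so your level of detail matches the paper's, but the wording should be corrected.
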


\begin{proof}
By Lemma~\ref{lem:identification} the set $Z = (\BuseTwo)^{-1}([0,\infty)) \intersect \HyperOne$ is homeomorphic to a horoball in $\Building$ which is contractible being a convex subset of a $\CAT(0)$ space.

We want to apply Brown's criterion, Theorem~\ref{thm:browns_criterion}. The filtration we consider is
\[
Z_i \defeq (\BuseTwo)^{-1}([0,i]), i \in \N \text{ .}
\]
The action on each of these spaces is cocompact by Corollary~\ref{cor:cocompact}. The stabilizers are of type $\ClassicalF_\infty$ by \cite[Theorem~B(b)]{abebro87}. By Lemma~\ref{lem:identification} the terms of the filtration are homeomorphic to the intersection of a horoball and a horoball complement in $\Building$. Since $\Building$ is irreducible, Theorem~\ref{thm:horoball_connectivity} implies that they are $(n-2)$-connected, so in particular the system $(\pi_{k}(Z_i))_i$ is essentially trivial for $k<n-1$. The theorem also implies that the system $(\tilde{H}_{n-1}(Z_i))_i$ not essentially trivial.
\end{proof}

\section{Bredon-finiteness properties}
\label{sec:bredon}

To determine the Bredon-finiteness properties of $\DGroup$ we have to understand the torsion and its fixed point sets.

\begin{lem}
\label{lem:only_2-torsion}
Every torsion element of $\DGroup$ has order (at most) $2$. In fact the same is true of every torsion element of $\BorelScheme(\Z[1/p])$.
\end{lem}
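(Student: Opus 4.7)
The plan is to factor a torsion element through its diagonal part and then reduce to the unipotent case.

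First, any torsion element $g$ of $\BorelScheme(\Z[1/p])$ has diagonal entries $d_1,\ldots,d_{n+1}$ lying in the torsion subgroup of $\Z[1/p]^\times$. Since $\Z[1/p]^\times = \{\pm 1\} \times \langle p \rangle$ with $\langle p \rangle$ infinite cyclic, that torsion subgroup is exactly $\{\pm 1\}$. Consequently $d_i^2 = 1$ for each $i$, so $g^2$ is upper triangular with all diagonal entries equal to $1$, that is, $g^2 \in \UniScheme(\Z[1/p])$.

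Second, I would verify that $\UniScheme(\Z[1/p])$ is torsion-free. Write $u = I + N$ with $N$ strictly upper triangular and nonzero, and let $j \ge 1$ be the smallest superdiagonal on which $N$ has a nonzero entry. Since $N^\ell$ for $\ell \ge 2$ is supported only on strictly higher superdiagonals, the $j$-th superdiagonal of $u^k - I = kN + \binom{k}{2} N^2 + \cdots$ is exactly $k$ times the $j$-th superdiagonal of $N$. Hence $u^k = I$ forces $k = 0$. (Equivalently, $\UniScheme(\Z[1/p])$ is a finitely generated nilpotent group whose successive quotients along the lower central series are free $\Z[1/p]$-modules, hence torsion-free by a standard induction; in characteristic zero this is also visible via the Baker--Campbell--Hausdorff identification of $\UniScheme$ with its Lie algebra.)

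Combining the two steps, $g^2 = I$, so $g$ has order dividing $2$. The assertion for $\DGroup$ is then immediate, since $\DGroup \subseteq \BorelScheme(\Z[1/p])$. The argument is essentially routine; the only step with any real content is the torsion-freeness of $\UniScheme(\Z[1/p])$, and there the mild care is in singling out the lowest nonzero superdiagonal to isolate the linear-in-$k$ term.
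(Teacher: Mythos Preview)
Your proof is correct and follows essentially the same route as the paper: map to the diagonal via $\GroupRetraction$, observe that the torsion of $(\Z[1/p]^\times)^{n+1}$ is exactly $\{\pm 1\}^{n+1}$, deduce that $g^2 \in \UniScheme(\Z[1/p])$, and conclude using torsion-freeness of the unipotent part. The paper merely asserts that $\UniScheme(\Z[1/p])$ is torsion-free, whereas you supply the elementary ``lowest nonzero superdiagonal'' argument; this is the only difference.
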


\begin{proof}
Consider the homomorphism $\GroupRetraction|_{\BorelScheme(\Z[1/p])} \colon \BorelScheme(\Z[1/p]) \to \TorusScheme(\Z[1/p])$. Its kernel is $\UniScheme(\Z[1/p])$ which is torsion-free. The image $\TorusScheme(\Z[1/p]) \cong (\Z[1/p]^\times)^{n+1}$ is isomorphic to $(\{\pm 1\} \times \Z)^{n+1}$ and therefore contains only torsion of order $2$. Thus if $g \in \DGroup$ has finite order, then $\GroupRetraction(g^2) = 1$ and hence $g^2=1$.
\end{proof}

\begin{lem}
\label{lem:conjugate_to_diagonal}
Let $R$ be an integral domain in which $2$ is a unit. Every element of order $2$ in $\BorelScheme(R)$ is conjugate via an element of $\UniScheme(R)$ to a diagonal matrix.
\end{lem}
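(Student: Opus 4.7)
The plan is induction on the matrix size, the base case (a $1\times 1$ matrix) being trivial. For the inductive step, given $g \in \BorelScheme(R)$ with $g^2 = I$ of size $n+1$, write it in block form
\[
g = \begin{pmatrix} d_1 & v \\ 0 & h \end{pmatrix}
\]
where $d_1 \in R$, $v$ is a length-$n$ row vector, and $h$ is upper triangular of size $n$. Expanding $g^2 = I$ yields $d_1^2 = 1$, $h^2 = I$, and the relation $v(h + d_1 I) = 0$. By the inductive hypothesis there is a unipotent upper triangular $u'$ with $u' h u'^{-1} = d' = \diag(d_2, \ldots, d_{n+1})$ whose diagonal entries each square to $1$. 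Conjugating $g$ by $u_0 = \begin{pmatrix} 1 & 0 \\ 0 & u' \end{pmatrix} \in \UniScheme(R)$ produces $\begin{pmatrix} d_1 & v' \\ 0 & d' \end{pmatrix}$ with $v' = v u'^{-1}$, which inherits the relation $v'(d' + d_1 I) = 0$; in coordinates this forces $v'_i = 0$ whenever $d_{i+1} \neq -d_1$.

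It remains to kill the off-diagonal entries in the first row. Take $u_1 = \begin{pmatrix} 1 & w \\ 0 & I \end{pmatrix} \in \UniScheme(R)$ for a row vector $w$ still to be chosen; a direct computation gives
\[
u_1 \begin{pmatrix} d_1 & v' \\ 0 & d' \end{pmatrix} u_1^{-1} = \begin{pmatrix} d_1 & v' + w(d' - d_1 I) \\ 0 & d' \end{pmatrix} \text{ .}
\]
Componentwise this demands $w_i(d_{i+1} - d_1) = -v'_i$. When $v'_i \ne 0$ the previous step forces $d_{i+1} = -d_1$, so $d_{i+1} - d_1 = -2d_1$ is a unit in $R$ (as $2$ is), and one sets $w_i = v'_i/(2d_1)$; when $v'_i = 0$ one sets $w_i = 0$. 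Then $u_1 u_0 \in \UniScheme(R)$ conjugates $g$ to $\diag(d_1, \ldots, d_{n+1})$, completing the induction.

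The hypothesis that $2$ is a unit in $R$ enters exactly to invert $d_{i+1} - d_1 = \pm 2$ at this last step, while the constraint $v'(d' + d_1 I) = 0$ inherited from $g^2 = I$ guarantees that in the entries where this coefficient vanishes the right-hand side $-v'_i$ vanishes as well. I do not foresee a real obstacle: the only potential difficulty is solvability of the linear system over $R$ rather than over its fraction field, and this is precisely what the two structural facts just noted take care of.
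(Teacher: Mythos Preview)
Your inductive argument is correct. The paper, however, takes a shorter direct route: since $g^2 = I$ and the diagonal entries of $g$ are $\pm 1$, one forms the two projections $\tfrac{1}{2}(I+g)$ and $\tfrac{1}{2}(I-g)$ (both upper triangular with entries in $R$ because $2$ is a unit), observes that for each index $i$ exactly one of them has $i$-th diagonal entry equal to $1$, and assembles these columns into a single matrix $u \in \UniScheme(R)$ whose columns are eigenvectors of $g$; then $u^{-1} g u$ is diagonal. So the paper produces the conjugator in one stroke from the idempotent decomposition, whereas you build it up inductively by peeling off one row at a time and solving the resulting linear system. Your approach makes the role of the integral-domain hypothesis and the invertibility of $2$ very explicit at the point where they are actually needed, while the paper's projection trick is slicker and generalizes more readily (it is essentially the same idea used in Proposition~\ref{prop:involution_fixed}).
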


\begin{proof}
If $g^2 = \idmatrix$, then the diagonal entries $d_i$ of $g$ satisfy $d_i^2=1$. Since $R$ is an integral domain the only solutions of this equation are $\pm 1$. Hence every column of $g$ gives rise to a column of precisely one of
\[
\frac{1}{2}(\idmatrix + g) \quad \text{and} \quad \frac{1}{2}(\idmatrix -g)
\]
whose diagonal entry is $1$.
Collecting these columns gives a matrix in $\UniScheme(R)$ whose columns are eigenvectors of $g$.
\end{proof}

Putting both Lemmas together, we obtain:

\begin{prop}
\label{prop:conjugate_to_diagonal_group}
Every finite subgroup of $\DGroup$ is conjugate to a subgroup of $\DGroup \intersect \Torus$.
\end{prop}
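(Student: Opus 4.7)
By Lemma~\ref{lem:only_2-torsion} any finite subgroup $F < \DGroup$ consists of involutions and is in particular an elementary abelian $2$-group. Since $p$ is odd, the order $|F| = 2^k$ is a unit in $R \defeq \Z[1/p]$, which puts us exactly in the setting where a Maschke-style averaging argument produces an equivariant splitting of any short exact sequence of free $R$-modules. The strategy is to iterate this to simultaneously diagonalize $F$ via conjugation by a single element of $\UniScheme(\Z[1/p])$, which sits inside $\DGroup$ since strict upper triangular matrices have all diagonal entries equal to $1$ and hence trivially satisfy the determinant conditions defining $\AbelsScheme_{\VecOne,\VecTwo}$.

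Concretely, let $\Basis_1,\ldots,\Basis_{n+1}$ be the standard basis of $R^{n+1}$ and let $V_i = \operatorname{span}_R(\Basis_1,\ldots,\Basis_i)$ be the standard flag, which is preserved by $F \subseteq \BorelScheme(R)$. On each rank-one quotient $V_i/V_{i-1}$ the group $F$ acts by a character $\chi_i \colon F \to \{\pm 1\}$ (the values lie in $\{\pm 1\}$ because $R$ is a domain and the elements of $F$ square to the identity). Using $|F| \in R^\times$, I form the averaged vector
\[
\LCVector_i \defeq \frac{1}{|F|} \sum_{g \in F} \chi_i(g)^{-1}\, g \cdot \Basis_i\text{ .}
\]
A direct reindexing of the sum (using $\chi_i(g^{-1}g') = \chi_i(g)^{-1}\chi_i(g')$) shows $g \cdot \LCVector_i = \chi_i(g)\, \LCVector_i$ for all $g \in F$. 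Moreover, writing $g \cdot \Basis_i = \chi_i(g)\Basis_i + v_g$ with $v_g \in V_{i-1}$ (which holds because $g$ is upper triangular and acts by $\chi_i$ on the quotient) gives $\LCVector_i \in \Basis_i + V_{i-1}$.

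Therefore the matrix $u \in \GL_{n+1}(R)$ whose $i$-th column is $\LCVector_i$ is upper unitriangular with entries in $R$, i.e.\ $u \in \UniScheme(\Z[1/p]) \subseteq \DGroup$. By construction each $\LCVector_i$ is a common eigenvector for $F$, so conjugation by $u$ sends every $g \in F$ to $\diag(\chi_1(g),\ldots,\chi_{n+1}(g)) \in \TorusScheme(\Z[1/p])$. Since $u \in \DGroup$ and $F \subseteq \DGroup$, the conjugate $u^{-1} F u$ lies in $\DGroup \intersect \Torus$, as required.

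The only place where something could go wrong is in the averaging step, where one needs $|F|$ and the values $\chi_i(g)$ to be invertible in $R$; both are units because $|F|$ is a power of $2$ and $p$ is odd, which is exactly the hypothesis already used in Lemma~\ref{lem:conjugate_to_diagonal}. The argument is in essence the simultaneous-diagonalization refinement of that lemma, and is the step I expect to require the most care in making rigorous over the ring $\Z[1/p]$ rather than a field.
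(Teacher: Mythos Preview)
Your proof is correct. Both your argument and the paper's produce a conjugating element in $\UniScheme(\Z[1/p]) \subseteq \DGroup$ by constructing a unitriangular basis of simultaneous eigenvectors, and both rely on $2 \in \Z[1/p]^\times$. The difference is in how the eigenbasis is built: the paper diagonalizes one generator $g$ via Lemma~\ref{lem:conjugate_to_diagonal}, observes that the remaining (commuting) elements preserve the eigenspaces of $g$, and then iterates Lemma~\ref{lem:conjugate_to_diagonal} inside those coordinate blocks; you instead average over all of $F$ at once with the twist $\chi_i^{-1}$, which is a clean Maschke-style argument that handles the whole group in a single step. Your route is slightly more general in spirit (it works verbatim for any finite $F \le \BorelScheme(R)$ with $\abs{F} \in R^\times$, not just $2$-groups) and avoids the induction, while the paper's route reuses Lemma~\ref{lem:conjugate_to_diagonal} as a black box. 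Either way the outcome and the conjugating element are the same.
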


\begin{proof}
Lemma~\ref{lem:only_2-torsion} implies that every finite subgroup of $\DGroup$ is abelian. So if $g$ and $h$ span a finite group, then $h$ leaves the eigenspaces of $g$ invariant. Applying Lemma~\ref{lem:conjugate_to_diagonal} inductively gives a a basis of $(\Z[1/p])^{n+1}$ of simultaneous eigenvectors of any finite subgroup.
\end{proof}

We can now explain the significance of the remaining notions from Section~\ref{sec:precise}. To understand torsion in $\DGroup$ it suffices by Proposition~\ref{prop:conjugate_to_diagonal_group} to understand torsion in $\Torus \intersect \DGroup$. Every diagonal involution can be described by a partition into the indices with entry $+1$ and $-1$ respectively. This partition is admissible if and only if the involution is an element of $\DGroup$.

The fixed point set of such an element will turn out to be the extended Bruhat--Tits building of its centralizer and in particular decomposes into a direct product of the extended buildings corresponding to the $+1$ and $-1$ eigenspace respectively. Clearly, the fixed point set $\FixedPointSet$ of a finite group is the intersection of the fixed point sets of its generators and therefore decomposes as a product of buildings that correspond to blocks of an admissible partition.

Each of these extended buildings is a direct product of a line and a building. If the least factor of $\FixedPointSet$ that contains $\PAIOT$ in its boundary has a line as a direct factor, then $\FixedPointSet \intersect \HyperOT$ is contractible. If on the other hand $\PAIOT$ is contained in the boundary of the building factors, which happens if and only if the partition is a partition of $\VecOT$, then Theorem~\ref{thm:horoball_connectivity} implies that the connectivity of $\FixedPointSet \intersect \HyperOT$ is determined by the least factor that contains it. A building factor contributes if and only if its block is essential and therefore the Bredon-finiteness length of $\DGroup$ is controlled by the essential dimension.

\medskip

Fixed point sets of finite order automorphisms of $\EBuilding$ are generally well-studied, see for example \cite{prayu01}. Inner involutions, that is automorphisms that come from involutions in $\GL_{n+1}(\LocalField)$ are particularly easy to understand. Their fixed point sets can be described as follows.

\begin{prop}
\label{prop:involution_fixed}
Let $\LocalField$ be a local field of residue characteristic $\ne 2$. Let $\Involution \in \GL_{n+1}(\LocalField)$ be an involution. Let $V^+$ and $V^-$ be the eigenspaces of $\Involution$ to the eigenvalue $+1$ and $-1$ respectively. The fixed point set of $\Involution$ on $\EBuilding$ is equivariantly isometric to the extended building associated to the group $\GL(V^+) \times \GL(V^-)$ (which is the centralizer of $\Involution$ in $\GL_{n+1}(\LocalField)$).
\end{prop}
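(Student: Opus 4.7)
My approach uses the description of $\EBuilding$ as the space of (additive, non-archimedean) norms on $V \defeq \LocalField^{n+1}$ given in Appendix~\ref{sec:extended_simplicial}. Under this identification an element $g \in \GL(V)$ acts on a norm $\Norm$ by $(g.\Norm)(v) = \Norm(g^{-1}v)$, and a norm is fixed by $\Involution$ precisely when $\Norm \circ \Involution = \Norm$. The extended buildings of $\GL(V^+)$ and $\GL(V^-)$, which I denote by $\EBuilding^+$ and $\EBuilding^-$, have the analogous description in terms of norms on $V^\pm$. The goal is to exhibit a $\GL(V^+) \times \GL(V^-)$-equivariant isometry between $\EBuilding^\Involution$ and $\EBuilding^+ \times \EBuilding^-$; the identification $C_{\GL(V)}(\Involution) = \GL(V^+) \times \GL(V^-)$ of the centralizer is standard since $\Involution$ is semisimple with two distinct eigenvalues.

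The central technical step is a splitting lemma for $\Involution$-fixed norms: for every such $\Norm$ and every $v = v^+ + v^- \in V^+ \oplus V^-$, one has $\Norm(v) = \min(\Norm(v^+), \Norm(v^-))$. Since the residue characteristic is not $2$ we have $\Valuation(1/2) = 0$, so writing $v^\pm = \tfrac12(v \pm \Involution v)$ and using $\Involution$-invariance of $\Norm$ gives $\Norm(v^\pm) \ge \min(\Norm(v), \Norm(\Involution v)) = \Norm(v)$; the reverse inequality is the ultrametric bound applied to $v = v^+ + v^-$. Conversely, any pair of norms $\Norm^\pm$ on $V^\pm$ yields an $\Involution$-fixed norm on $V$ via $\Norm(v) \defeq \min(\Norm^+(v^+), \Norm^-(v^-))$; the norm axioms are straightforward to verify. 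The two constructions are inverse and manifestly equivariant under $\GL(V^+) \times \GL(V^-)$, which provides the desired bijection $\EBuilding^\Involution \leftrightarrow \EBuilding^+ \times \EBuilding^-$.

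To promote the bijection to an isometry I would argue apartment-by-apartment. Concatenating any bases of $V^+$ and $V^-$ gives a basis $(\Basis_1, \ldots, \Basis_{n+1})$ of $\Involution$-eigenvectors; the associated apartment of $\EBuilding$ consists of the norms splittable along this basis, parameterized by their values $(\Norm(\Basis_1), \ldots, \Norm(\Basis_{n+1})) \in \R^{n+1}$ with the standard Euclidean metric. The involution $\Involution$ acts trivially on this apartment since its diagonal entries in the basis $(\Basis_i)$ are $\pm 1$, both of valuation zero. Under the bijection the apartment corresponds to a product apartment of $\EBuilding^+ \times \EBuilding^-$ with the same parameterization $\R^{m^+} \oplus \R^{m^-} = \R^{n+1}$ and the same Euclidean metric. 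Every $\Involution$-fixed norm lies in some such apartment because each $\Norm^\pm$ splits along some basis of $V^\pm$ by standard norm theory. Hence the bijection is an isometry on each apartment and, since the $\CAT(0)$ metrics on both sides are determined by their restrictions to apartments, a global isometry.

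The main obstacle is the splitting lemma in the second paragraph: this is precisely where the residue-characteristic hypothesis is essential, and the fourth item in the remarks following the Main~Theorem indicates that without it the fixed point set of an inner involution on $\EBuilding$ is genuinely larger than the centralizer's building. Once the splitting is established the rest of the argument is essentially bookkeeping with apartments.
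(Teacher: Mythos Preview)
Your argument is essentially the paper's second proof (via splittable norms, \cite[Th\'eor\`eme~2.11]{brutit84b}): the same splitting lemma $\Norm(v)=\min\{\Norm(v^+),\Norm(v^-)\}$, established via $v^\pm=\tfrac12(v\pm\Involution v)$ and $2\in\Ring^\times$, and you add explicit detail on the apartment-by-apartment isometry that the paper leaves implicit. One small slip: the norm description of $\EBuilding$ is not in Appendix~\ref{sec:extended_simplicial} (which treats the lattice model used in the paper's first proof) but is recalled from \cite{brutit84b} just before Proof~2.
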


We give two proofs that are essentially the same but refer to different models of $\EBuilding$. For the first recall that $\EBuilding$ is a simplicial complex whose vertices are $\Ring$-lattices in $\LocalField^{n+1}$, see Appendix~\ref{sec:extended_simplicial}. Here $\Ring$ is the valuation ring of $\LocalField$.

\begin{proof}[Proof~1]
Let $\Lattice$ be a lattice in $V = \LocalField^{n+1}$. Let $\Lattice^+ = \Lattice \intersect V^+$ and $\Lattice^- = \Lattice \intersect V^-$. We have to show that $\Lattice$ is $\Involution$-fix if and only if $\Lattice = \Lattice^+ + \Lattice^-$ because the lattices meeting the second condition are the ones lying in the building associated to $\GL(V^+) \times \GL(V^-)$.

Clearly if $\Lattice = \Lattice^+ + \Lattice^-$, then $\Involution \Lattice = \Lattice^+ - \Lattice^- = \Lattice$, so $\Lattice$ is $\Involution$-invariant.

For the converse note that $2 \in \Ring^\times$ by assumption. Assume that $\Lattice$ is $\Involution$-invariant and let $\LCVector \in \Lambda$. Then
\[
\LCVector = \frac{1}{2}(\LCVector + \Involution\LCVector) + \frac{1}{2}(\LCVector - \Involution\LCVector)
\]
where $1/2(\LCVector \pm \Involution\LCVector) \in \Lambda^\pm$. This closes the proof.
\end{proof}

For the second proof recall that the points of $\EBuilding$ correspond to splitable norms on $\LocalField^{n+1}$, see \cite[Théorème~2.11]{brutit84b}. Here norms are understood additively as in \cite[1.1]{brutit84b}: A norm on a $\LocalField$-vector space $V$ is a map $\LocalField \to \R \union \{\infty\}$ such that for $\LCVector, \LCVector' \in V$ and $k \in \LocalField$ the following hold:
\begin{enumerate}
\item $\Norm(k \LCVector) = \Valuation(k) + \Norm(\LCVector)$,
\item $\Norm(\LCVector + \LCVector') \ge \inf\{\Norm(\LCVector),\Norm(\LCVector')\}$, and
\item $\Norm(\LCVector) = \infty$ if and only if $\LCVector = 0$.
\end{enumerate}
A norm $\Norm$ is said to \emph{split} over a decomposition $V = V_1 \oplus V_2$ if $\Norm(\LCVector_1 + \LCVector_2) = \inf\{\Norm(\LCVector_1),\Norm(\LCVector_2)\}$ for $\LCVector_i \in V_i$, \cite[1.4]{brutit84b}. This clearly gives a notion of when a norm splits over a decomposition into more than two summands and a norm on $V$ is said to be \emph{splitable} if there is a decomposition of $V$ into one-dimensional subspaces over which it splits.

\begin{proof}[Proof~2]
It suffices to show that a norm $\Norm$ is $\Involution$-invariant if and only if splits over $V^+ \oplus V^-$.

Again it is clear that $\Norm$ if $\Involution$-invariant if it splits over $V^+ \oplus V^-$.

So suppose that $\Norm$ is $\Involution$-invariant and let $\LCVector \in \LocalField^{n+1}$. Write
\[
\LCVector = \frac{1}{2}(\LCVector + \Involution\LCVector) + \frac{1}{2}(\LCVector - \Involution\LCVector)
\]
where $1/2(\LCVector \pm \Involution\LCVector) \in V^\pm$.
Then $\Norm(\LCVector) \ge \inf \{1/2(\LCVector + \Involution\LCVector),1/2(\LCVector - \Involution\LCVector)\}$ but also
\[
\Norm\left(\frac{1}{2}(\LCVector \pm \Involution\LCVector)\right) \ge \inf\left\{\Norm(\frac{1}{2}\LCVector),\Norm(\frac{1}{2}\Involution\LCVector)\right\} = \Norm(\LCVector)
\]
because $\Norm$ is $\Involution$ invariant and $2 \in \Ring^\times$. This shows that $\Norm(\LCVector) = \inf\{1/2(\LCVector + \Involution\LCVector),1/2(\LCVector - \Involution\LCVector)\}$ as desired.
\end{proof}

For diagonal matrices we get the following more explicit statement. For brevity we write $\pm$ to mean either $+$ or $-$ consistently in each expression.

\begin{cor}
\label{cor:fixed_point_set_structure}
Assume that the residue characteristic of $\LocalField$ is not $2$. Let $\Involution = \diag(d_i)$ be of order $2$ in $\Torus$. Let $J^\pm = \{i \in \{1,\ldots,n+1\} \mid d_i = \pm 1\}$.

The fixed point set $\FixedPointSet$ of $\Involution$ decomposes as a direct product
$\FixedPointSet = \Building^+ \times \ExtendingFactor^+ \times \Building^- \times \ExtendingFactor^-$
where $\Building^\pm$ are buildings of type $\tilde{A}_{\abs{J^\pm} - 1}$ and $\ExtendingFactor^\pm$ are euclidean lines. More precisely, $\Retraction(\Building^\pm) = \gen{\ABasis_i - \ABasis_j \mid i,j \in I^\pm}$ and $\ExtendingFactor^\pm = \gen{\sum_{i \in J^\pm} \ABasis_i}$.
\end{cor}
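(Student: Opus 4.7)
The plan is to derive the corollary directly from Proposition~\ref{prop:involution_fixed} by unpacking the product structure of the extended building of the centralizer $\GL(V^+)\times\GL(V^-)$.

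Because $\Involution = \diag(d_i)$ with $d_i \in \{\pm 1\}$, the eigenspaces are the coordinate subspaces $V^\pm = \gen{\ABasis_i \mid i \in J^\pm}$. Proposition~\ref{prop:involution_fixed} then identifies $\FixedPointSet$ with the extended Bruhat--Tits building of $\GL(V^+) \times \GL(V^-)$, and since the extended building of such a product splits as the product of the extended buildings of the factors, we obtain $\FixedPointSet \isom \EBuilding^+ \times \EBuilding^-$, where $\EBuilding^\pm$ is the extended building of $\GL(V^\pm)$. By the very definition of the extended building, each $\EBuilding^\pm$ further decomposes as $\Building^\pm \times \ExtendingFactor^\pm$ with $\Building^\pm$ the non-extended Bruhat--Tits building of $\SL(V^\pm)$, which has type $\tilde{A}_{\abs{J^\pm}-1}$, and $\ExtendingFactor^\pm$ a euclidean line. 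This gives the first assertion.

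For the precise identification inside $\EApartment = \R^{n+1}$, I would locate the relevant subapartments. The diagonal torus of $\GL(V^\pm)$ sits inside $\Torus$ as the subgroup with $i$-th entry equal to $1$ for $i \notin J^\pm$, and by \eqref{eq:torus_action} acts by translations along $\EApartment^\pm \defeq \gen{\ABasis_i \mid i \in J^\pm}$; hence $\EApartment^\pm$ is the standard apartment of $\EBuilding^\pm$. Within $\EApartment^\pm$, the euclidean line factor $\ExtendingFactor^\pm$ is the common kernel of the roots $\ABasis_i - \ABasis_j$ for $i,j \in J^\pm$, namely the diagonal $\gen{\sum_{i \in J^\pm} \ABasis_i}$, while its orthogonal complement in $\EApartment^\pm$ is the apartment $\Apartment^\pm = \gen{\ABasis_i - \ABasis_j \mid i,j \in J^\pm}$ of the non-extended factor $\Building^\pm$. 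Since $\Retraction$ is centered at $\EChamber$ and restricts on the fixed subbuilding to the product of the natural retractions $\EBuilding^\pm \to \EApartment^\pm$ onto the corresponding subchambers of $\EChamber$, we conclude $\Retraction(\Building^\pm) = \Apartment^\pm$, as required.

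The main obstacle I anticipate is a bookkeeping issue about line factors rather than a conceptual one: the ambient $\EBuilding$ carries a single extra line $\ExtendingFactor = \gen{\sum_i \ABasis_i}$, whereas the fixed subbuilding acquires two lines $\ExtendingFactor^+ \oplus \ExtendingFactor^-$. The reconciliation is a short check using \eqref{eq:extended_action}: restricted to $\GL(V^\pm)$, the twist in that formula depends only on the determinant of the $J^\pm$-block, so the natural extra line in $\EBuilding^\pm$ lies along $\gen{\sum_{i \in J^\pm}\ABasis_i}$. Thus $\ExtendingFactor^+ \oplus \ExtendingFactor^-$ contains the ambient line $\ExtendingFactor$ as its diagonal, and their difference provides the extra direction that, together with $\Apartment^+ \oplus \Apartment^-$, recovers the full apartment $\Apartment$ inside $\EApartment$. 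This is exactly the Levi-type splitting one expects, and completes the plan.
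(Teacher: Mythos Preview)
Your proposal is correct and follows essentially the same route as the paper: apply Proposition~\ref{prop:involution_fixed} to identify $\FixedPointSet$ with the extended building of $\GL(V^+)\times\GL(V^-)$, then read off the factors by passing to the $\Torus$-invariant apartment via \eqref{eq:torus_action} and the compatibility with $\Retraction$. Your treatment is in fact more explicit than the paper's---in particular your last paragraph on reconciling the two lines $\ExtendingFactor^\pm$ with the single ambient line $\ExtendingFactor$ spells out a point the paper leaves implicit---but the strategy is the same.
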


\begin{proof}
Let $\Basis_1,\ldots,\Basis_{n+1}$ be the standard basis for $\LocalField^{n+1}$. The eigenspaces of $\Involution$ are $V^\pm = \gen{\Basis_i \mid i \in J^\pm}$. Let $\Building^{\pm 1}$ be the extended building of $\GL(V^\pm)$. Since the inclusion $\Building^{+1} \times \Building^{-1} \to \EBuilding$ is $\GL(V^+) \times \GL(V^-)$-equivariant we can determine the factors by looking at the invariant subspaces. Moreover, since everything commutes with $\Retraction$, it suffices to look at the action of $\Torus$ on $\EApartment$. We see that $\Building^{\pm 1} \intersect \EApartment$ is just the span of the $\ABasis_i, i \in J^\pm$ and that $X^{\pm 1}$ decomposes further as $\Building^\pm \times \ExtendingFactor^\pm$.
\end{proof}

As a consequence we get:

\begin{prop}
\label{prop:fixed_point_set_structure}
Let $F$ be a finite subgroup of $\DGroup \intersect \Torus$. Then there is an admissible partition of $I$ into blocks $J_1,\ldots,J_k$ such that the fixed point set $\FixedPointSet$ of $F$ decomposes as a direct product
\[
\FixedPointSet = X_1 \times L_1 \times \ldots \times X_k \times L_k
\]
with $L_\ell$ a euclidean line and $X_\ell$ a $(\abs{J_\ell}-1)$-dimensional building. The factors satisfy $\Retraction(X_\ell) = \gen{\ABasis_i - \ABasis_j \mid i \in J_\ell}$ and $L_\ell = \gen{\sum_{i \in I_\ell} \ABasis_i}$.

Conversely, if $\Partition$ is the common refinement of elementary admissible partitions $\Partition_1,\ldots,\Partition_r$ and $\Involution_1,\ldots,\Involution_r \in \DGroup$ are the corresponding involutions, then the partition arising above for $F=\gen{\Involution_1,\ldots,\Involution_r}$ is $\Partition$.
\end{prop}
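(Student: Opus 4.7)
The plan is to reduce the claim to Corollary~\ref{cor:fixed_point_set_structure} via a simultaneous-eigenspace decomposition, identifying the resulting partition with the common refinement of the elementary admissible partitions attached to the generators of $F$.

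First I would fix a generating set $\Involution_1, \ldots, \Involution_r$ of $F$. By Lemma~\ref{lem:only_2-torsion} each $\Involution_j$ is an involution, and being in $\Torus$ it is diagonal with $\pm 1$ entries; it is thus described by the partition $\Partition_j = \{J_j^+, J_j^-\}$ of $I$ into its $\pm 1$-eigenindices. The defining conditions $\prod_i d_i^{\VecOne_i} = 1 = \prod_i d_i^{\VecTwo_i}$ for $\Involution_j \in \DGroup$ become, for $d_i \in \{\pm 1\}$, the requirement that $\sum_{i \in J_j^-} \VecOne_i$ and $\sum_{i \in J_j^-} \VecTwo_i$ are both even. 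So each $\Partition_j$ is elementary admissible, and their common refinement $\Partition$---whose blocks $J_1, \ldots, J_k$ are the nonempty intersections $\bigcap_j J_j^{\epsilon_j}$ for sign patterns $(\epsilon_j) \in \{+,-\}^r$---is admissible by definition.

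Next I would identify $\FixedPointSet = \bigcap_j \FixedPointSet^{\Involution_j}$ with the extended Bruhat--Tits building of the centralizer $C_{\GL_{n+1}(\LocalField)}(F) = \prod_\ell \GL(V_{J_\ell})$, where $V_J = \gen{\Basis_i \mid i \in J}$. To do so I would iterate Proof~1 of Proposition~\ref{prop:involution_fixed}: a lattice $\Lattice \subseteq \LocalField^{n+1}$ is $\Involution_1$-fixed iff it splits as $\Lattice = (\Lattice \cap V_1^+) \oplus (\Lattice \cap V_1^-)$, and since the $\Involution_j$ commute, each $V_1^\pm$ is $\Involution_2$-stable, so the same splitting recipe applies to each summand, using $\tfrac{1}{2} \in \Ring$ once more. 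By induction on the number of generators this yields $\Lattice = \bigoplus_\ell \Lattice \cap V_{J_\ell}$ as the precise condition for $\Lattice$ to be $F$-fixed, and such lattices are exactly the vertices of the extended building of the centralizer. Each $\Involution_j$ has determinant $\pm 1$, so by \eqref{eq:extended_action} it acts trivially on the extending line coordinate; this confirms that the extended line factors survive intact at every stage of the induction.

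Finally I would translate the abstract product into the coordinate form claimed in the statement. The extended building of $\GL(V_{J_\ell})$ naturally splits as $X_\ell \times L_\ell$ with $X_\ell$ of type $\tilde{A}_{\abs{J_\ell}-1}$. Intersecting $\FixedPointSet$ with $\EApartment$ and using \eqref{eq:torus_action}, the $J_\ell$-component contributes the span $\gen{\ABasis_i \mid i \in J_\ell} = \gen{\ABasis_i - \ABasis_j \mid i,j \in J_\ell} \oplus \gen{\sum_{i \in J_\ell} \ABasis_i}$, and the $\Retraction$-equivariance \eqref{eq:retraction_relation} identifies the two summands with $\Retraction(X_\ell)$ and $L_\ell$ respectively, just as in Corollary~\ref{cor:fixed_point_set_structure}. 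The converse assertion is then tautological: the partition produced by the above procedure for $F = \gen{\Involution_1, \ldots, \Involution_r}$ is literally the common refinement of the $\Partition_j$. The main obstacle I anticipate is the iterated lattice decomposition; commutativity of $F$ and $\tfrac{1}{2} \in \Ring$ make it go through, but this is the step most in need of careful bookkeeping.
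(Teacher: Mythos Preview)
The paper does not supply an explicit proof of this proposition; it is stated immediately after Corollary~\ref{cor:fixed_point_set_structure} with the phrase ``As a consequence we get'', leaving the reader to iterate the single-involution result over a generating set of $F$. Your proposal carries out precisely that iteration and is correct: the key points---that diagonal involutions in $\DGroup$ give elementary admissible partitions, that commutativity lets you split lattices inductively via the $\tfrac{1}{2}(\LCVector\pm\Involution\LCVector)$ trick, and that the apartment picture identifies the factors as claimed---are all handled soundly, and the observation that $\Valuation(\det\Involution_j)=0$ so the extending lines are preserved is a useful detail the paper does not spell out.

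One minor remark: you frame the induction at the level of lattices (iterating Proof~1 of Proposition~\ref{prop:involution_fixed}), whereas the paper's ``as a consequence'' more naturally points to iterating Corollary~\ref{cor:fixed_point_set_structure} directly, i.e.\ intersecting the product decompositions of the individual $\FixedPointSet^{\Involution_j}$ inside $\EBuilding$. The two are equivalent, and your lattice-level version is arguably more transparent since it makes the role of $2\in\Ring^\times$ visible at each step.
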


We are now ready to prove the second part of the Main Theorem.

\begin{thm}
$\DGroup$ is of type $\BredonFPfin_{m-1}$ but not of type $\BredonFPfin_{m}$.
\end{thm}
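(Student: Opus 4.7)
The plan is to apply Theorem~\ref{thm:browns_criterion_for_bredon} with exactly the same setup used for the classical statement: take $\DGroup$ acting on $Z = \HyperOne \intersect (\BuseTwo)^{-1}([0,\infty))$ with filtration $Z_i = \HyperOne \intersect (\BuseTwo)^{-1}([0,i])$. The filtration is $\DGroup$-cocompact by Corollary~\ref{cor:cocompact}, and the cell stabilizers are virtually torsion-free arithmetic groups of type $\ClassicalF_\infty$ (as in the proof of Theorem~\ref{thm:classical_finiteness_length}), hence of type $\BredonFPfin_\infty$. Everything therefore reduces to analyzing the fixed point sets $Z_i^F$ and the behavior of $\tilde H_k(Z_i^F)$ for finite $F \leq \DGroup$.

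The first reduction is to use Proposition~\ref{prop:conjugate_to_diagonal_group} to restrict to $F \subseteq \DGroup \intersect \Torus$, and Proposition~\ref{prop:fixed_point_set_structure} to realize the fixed point set as $\FixedPointSet^F = \prod_\ell (X_\ell \times L_\ell)$ for the associated admissible partition $\Partition = \{J_1,\ldots,J_k\}$. Writing $X = \prod_\ell X_\ell$ and $L = \prod_\ell L_\ell$, the inclusion $\EApartment \subseteq \FixedPointSet^F$ places $\PAIOne$, $\PAITwo$, and $\PAIOT$ all in $\partial \FixedPointSet^F$, so $Z_i^F$ is simply the intersection of a horosphere around $\PAIOne$ with a horoball slice around $\PAITwo$ inside $\FixedPointSet^F$. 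Since $\sum_i \VecOne_i > 0$ forces the projection of $\VecOne$ onto $L$ to be nonzero, the higher-dimensional analogue of Proposition~\ref{prop:generalized_identification} applies: projecting onto $X \times L'$, where $L' \subset L$ is the hyperplane orthogonal to the $L$-component of $\VecOne$, identifies $\HyperOne \intersect \FixedPointSet^F$ with $X \times L'$ and identifies $(\BuseTwo)^{-1}([0,i])$-slices with $\BuseOT$-slices around the image of $\PAIOT$.

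The crux is the dichotomy according to whether $\Partition$ is a partition of $\VecOT$. If it is not, then $\VecOT$ has a nonzero $L$-component, and after identification the image of $\PAIOT$ still has a nontrivial euclidean direction inside $X \times L'$ (the case $k=1$ is excluded since the trivial partition is always a partition of $\VecOT$); the $\BuseOT$-slice $Z_i^F$ then deformation retracts along that euclidean direction and is contractible. If $\Partition$ is a partition of $\VecOT$, then $\VecOT$ lies entirely in $X$, and because $\VecOT$ vanishes on non-essential blocks it in fact lies in $X_{\mathrm{ess}} \defeq \prod_{\ell\text{ essential}} X_\ell$ of total dimension $\ed(\Partition)$. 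The remaining building factors together with $L'$ form a contractible space on which $\BuseOT$ is constant, so up to homotopy $Z_i^F$ is a $\BuseOT$-slice in $X_{\mathrm{ess}}$; Theorem~\ref{thm:horoball_connectivity} then asserts that $Z_i^F$ is $(\ed(\Partition)-2)$-connected and that the system $(\tilde H_{\ed(\Partition)-1}(Z_i^F))_i$ is not essentially trivial.

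Both halves of the theorem then fall out. For the positive direction, every admissible partition of $\VecOT$ satisfies $\ed(\Partition) \geq m$, so every $Z_i^F$ is $(m-2)$-acyclic; the essential-triviality condition of Theorem~\ref{thm:browns_criterion_for_bredon} with $n = m-1$ is then trivially satisfied since $\tilde H_k(Z_i^F) = 0$ for $k < m-1$, yielding type $\BredonFPfin_{m-1}$. For the negative direction I will choose an admissible partition of $\VecOT$ realizing the minimum $\ed(\Partition) = m$ and the associated finite $F \leq \DGroup \intersect \Torus$ furnished by Proposition~\ref{prop:fixed_point_set_structure}; the non-essentially-trivial system established above obstructs the criterion at $n = m$, ruling out $\BredonFPfin_m$. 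The main obstacle will be the higher-dimensional analogue of Proposition~\ref{prop:generalized_identification} together with the clean separation of $\FixedPointSet^F$ into the essential factor $X_{\mathrm{ess}}$ that carries the only non-constant part of $\BuseOT$ and a contractible complement made of $L'$ and the non-essential building factors, so that Theorem~\ref{thm:horoball_connectivity} can be invoked with exactly the dimension $\ed(\Partition)$.
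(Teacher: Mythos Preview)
Your proposal is correct and follows essentially the same route as the paper: the same filtration $Z_i$, the same reduction to diagonal $F$ via Proposition~\ref{prop:conjugate_to_diagonal_group}, the same product decomposition of $\FixedPointSet^F$ via Proposition~\ref{prop:fixed_point_set_structure}, the same dichotomy on whether the associated admissible partition is a partition of $\VecOT$, and the same appeal to Theorem~\ref{thm:horoball_connectivity} on the essential factor. The only cosmetic differences are that the paper projects off the global line $\ExtendingFactor = \gen{(1,\ldots,1)}$ (which is already a direct factor of $\FixedPointSet^F$, so Proposition~\ref{prop:generalized_identification} applies as stated, with no higher-dimensional analogue needed) rather than off the line spanned by the $\prod_\ell L_\ell$-component of $\VecOne$, and the paper separates the case $m=n$ explicitly when choosing the witnessing $F$ for the negative direction.
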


\begin{proof}
We consider the same setup as in the proof of Theorem~\ref{thm:classical_finiteness_length} but this time apply Theorem~\ref{thm:browns_criterion_for_bredon} instead of Brown's classical criterion.

The stabilizers are of type $\BredonFPfin_\infty$ by \cite[Proof~of~Theorem B(b)]{abebro87} and \cite[Theorem~1.1]{kromapnuc09}.

We will verify the following statements, which are stronger than the needed connectivity hypotheses:
\begin{enumerate}
\item That $\FiltrationSet_i^F$ is $(m-2)$-connected for all $i \in \N$ and every finite $F \le \DGroup$.\label{item:sublevel_sets_connected}
\item And that there is a finite $F \le \DGroup$ such that the maps of the system
$(\tilde{H}_{m-1}(\FiltrationSet_i^F))_{i \in \N}$ are infinitely often not
injective.\label{item:sublevel_set_not_connected}
\end{enumerate}
From these assertions the result follows.

The case $F = 1$ of \eqref{item:sublevel_sets_connected} has already been verified in the proof of Theorem~\ref{thm:classical_finiteness_length}. So now we look at a nontrivial finite subgroup $F$.
By Proposition~\ref{prop:conjugate_to_diagonal_group} $F$ is conjugate to a group of diagonal matrices and since conjugation does not change the homotopy type of the fixed point set, we may as well assume that $F$ is diagonal.

The fixed point set $\FixedPointSet = (\EBuilding)^F$ is described by Proposition~\ref{prop:fixed_point_set_structure} and decomposes according to an admissible partition $\Partition = \{J_1,\ldots,J_k\}$ as a product of euclidean buildings $\Building_\ell,1 \le \ell \le k$ and a euclidean space $L_1 \times \ldots \times L_k$. Proposition~\ref{prop:generalized_identification} implies that the map $\Project$ identifies the intersection $\FixedPointSet \intersect \HyperOne$ with $\FixedPointSet \intersect \Building$ in such a way that horoballs around $\PAITwo$ are mapped to horoballs around $\PAIOT$.

If $\PAIOT$ is not contained in the boundary of the product of the $\Building_\ell$, then it includes an acute angle with the endpoint of a direct factor of $\FixedPointSet$ that is a euclidean line. In that case Proposition~\ref{prop:generalized_identification} implies that $\FixedPointSet_i$ is contractible.
That $\PAIOT$ is contained in the boundary of the product of the $\Building_\ell$ is equivalent to the condition that $\VecOT$ is perpendicular to $L_\ell$ for all $\ell$, which is to say that $\Partition$ is a partition of $\VecOT$. If this is the case, then the minimal factor of $\FixedPointSet$ that contains $\PAIOT$ in its boundary is the product of those $\Building_\ell$ for which $J_\ell$ is essential. Therefore Theorem~\ref{thm:classical_finiteness_length} implies  that $\FixedPointSet \intersect \HyperOT$ is $(\ed(\Partition)-2)$-connected and in particular $(m-2)$-connected.

Finally we verify \eqref{item:sublevel_set_not_connected}. If $m=n$, the statement has been verified in the proof of Theorem~\ref{thm:classical_finiteness_length} for $F$ the trivial group. So assume $m<n$. Let $\Partition$ be an admissible partition of $\VecOT$ with $\ed(\Partition) = m$. Then $\Partition$ is the coarsest common refinement of essentially admissible partitions $\Partition_1, \ldots, \Partition_r$ that correspond to diagonal involutions $\Involution_1,\ldots,\Involution_r \in \DGroup$. Let $F=\gen{\Involution_1,\ldots,\Involution_r}$ and let $\FixedPointSet \defeq (\EBuilding)^F$ be its fixed point set. Proposition~\ref{prop:fixed_point_set_structure} describes the structure of $\FixedPointSet$. In particular it implies that $\PAIOT$ lies in the boundary of a factor of $\FixedPointSet$ that is a building of dimension $m$. Therefore Theorem~\ref{thm:horoball_connectivity} shows that the directed system $\tilde{H}_{m-1}(\FixedPointSet_i)$ is infinitely often not injective.
\end{proof}

\appendix

\section{The extended building of $\GL_{n}(\LocalField)$ as a simplicial complex}
\label{sec:extended_simplicial}

Let $\LocalField$ be a field equipped with a discrete valuation, let $\Ring$ be its valuation ring, and let $\Uniformizer$ be a uniformizing element. Let $V = \LocalField^{n}$. By an \emph{$\Ring$-lattice in $V$} (or just a lattice) we mean an $\Ring$-submodule $\Lambda$ of $V$ such that the map $\LocalField \otimes_{\Ring} \Lambda \to V$ is an isomorphism. We denote by $\ESimpBuilding$ the simplicial complex whose vertices are the $\Ring$-lattices in $V$ and whose simplices are flags
\[
\Lambda_0 \le \ldots \le \Lambda_k
\]
of lattices such that $\Uniformizer \Lambda_k \le \Lambda_0$.

Clearly $\GL(V)$ acts on $\ESimpBuilding$. Taking the quotient modulo the action of $\LocalField^\times$ gives a projection
\[
\delta \colon \ESimpBuilding \to \SimpBuilding
\]
where $\SimpBuilding$ has as vertices homothety classes $\EqClass\Lambda$ of lattices $\Lambda$. It is clear from the definition that $\SimpBuilding$ is just the affine building associated to $\SL_{n}(V)$, see for example \cite[Chapter~9.2]{ronan}. In particular, $\Building$ can be regarded as the geometric realization of $\SimpBuilding$.

\begin{figure}[htb]
\begin{center}
\includegraphics{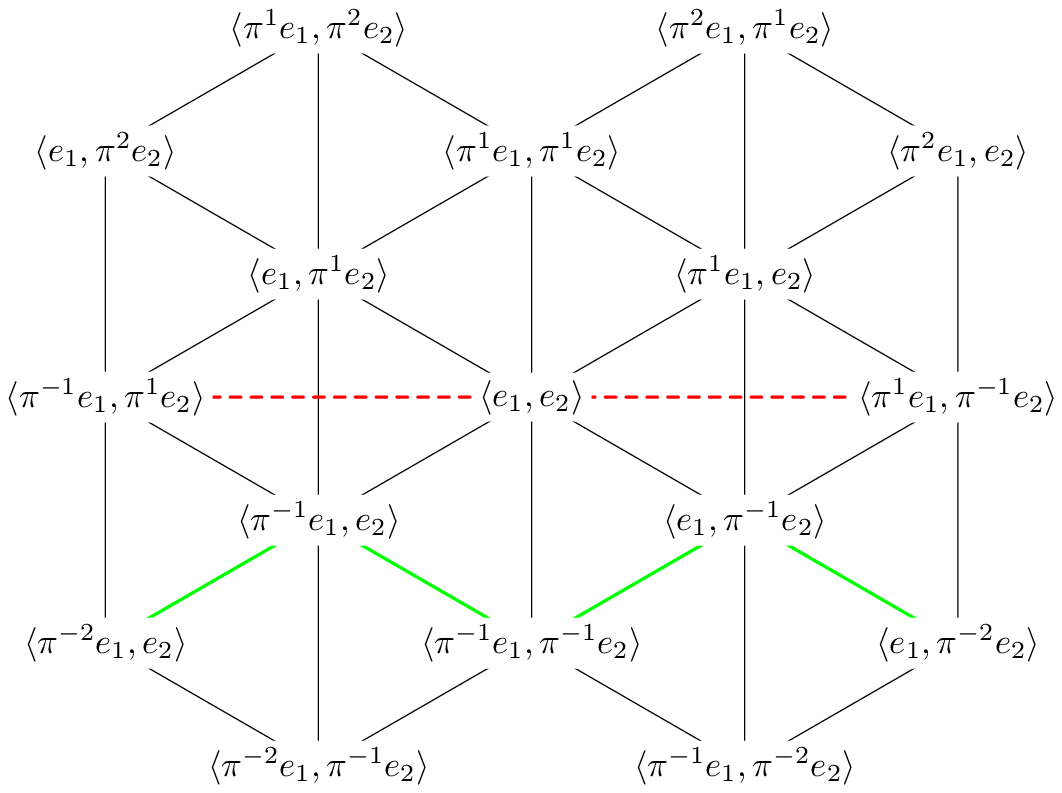}
\end{center}
\caption{Part of the fundamental apartment of the extended building $\ESimpBuilding$ for $\SL_2(\LocalField)$. A simplicial subcomplex that is isomorphic to the fundamental apartment of $\SimpBuilding$ is drawn in bold green. A subspace that is isometric to a fundamental apartment of $\Building$ is dashed in bold red.}
\label{fig:splittings}
\end{figure}

We want to see that similarly $\EBuilding$ can be regarded as the geometric realization of $\ESimpBuilding$. We have to be a little careful because even though the projection $\EBuilding \to \Building$ of metric spaces as well as the projection $\ESimpBuilding \to \SimpBuilding$ of simplicial complexes admit splittings (by isometric respectively simplicial embeddings) these splittings do not coincide under the identification we want to make (see Figure~\ref{fig:splittings}).

One way to deal with this would be to construct appropriate subdivisions of $\SimpBuilding$ and $\ESimpBuilding$ under which the metric splitting becomes simplicial. Instead, we exhibit an equivariant homeomorphism $\trealize{\ESimpBuilding} \to \trealize{\SimpBuilding} \times \SimpLine$ that is not induced by a simplicial map.

To do so we use the following definitions from \cite{grayson82}. The \emph{length} $\length(M)$ of a $\Ring$-module $M$ is the length of a maximal chain of proper submodules of $M$. For two arbitrary lattices $\Lambda$ and $\Lambda_0$ the \emph{index} $\ind(\Lambda,\Lambda_0)$ is $\length(\Lambda/\Lambda_1) - \length(\Lambda_0/\Lambda_1)$ for any lattice $\Lambda_1$ contained in $\Lambda$ and $\Lambda_0$. Finally we fix a lattice $\Lambda_0$ and define the map $\varepsilon \colon \trealize{\ESimpBuilding} \to \SimpLine$ by
\[
\varepsilon(\Lambda) = \frac{1}{n}\ind(\Lambda,\Lambda_0)
\]
on vertices and extending affinely. This last definition is different from Grayson's who wants the map to be simplicial.

\begin{prop}
The map $\trealize{\delta} \times \varepsilon \colon \trealize{\ESimpBuilding} \to \trealize{\SimpBuilding} \times \SimpLine$ is a $\GL(V)$-equivariant homeomorphism.
\end{prop}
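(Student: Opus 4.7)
The plan is to verify equivariance directly from the definitions of $\delta$ and $\varepsilon$, and then to check that $F \defeq \trealize\delta \times \varepsilon$ is a homeomorphism by inspecting the preimage of $\tau \times \SimpLine$ for each simplex $\tau$ of $\SimpBuilding$.

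For equivariance, $\trealize\delta$ commutes with the $\GL(V)$-action by construction, so only $\varepsilon$ requires checking. The target action of $g \in \GL(V)$ on the $\SimpLine$-factor is $r \mapsto r - \frac{1}{n}\Valuation(\det g)$, following \eqref{eq:extended_action}. Using the cocycle identity $\ind(g\Lambda, \Lambda_0) = \ind(g\Lambda, g\Lambda_0) + \ind(g\Lambda_0, \Lambda_0)$, the $\GL(V)$-invariance of length, and the identity $\ind(g\Lambda_0, \Lambda_0) = -\Valuation(\det g)$ (which reduces to the diagonal case via Smith normal form), one computes $\varepsilon(g\Lambda) = \varepsilon(\Lambda) - \frac{1}{n}\Valuation(\det g)$, as required.

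For the homeomorphism assertion, both factors of $F$ are piecewise affine and hence continuous. Fix a simplex $\tau$ of $\SimpBuilding$ with vertices $[\Lambda_0], \ldots, [\Lambda_k]$ and choose a lift $\Lambda_0 < \ldots < \Lambda_k$ satisfying $\Uniformizer\Lambda_k \le \Lambda_0$. A short combinatorial check shows that every simplex of $\ESimpBuilding$ projecting onto $\tau$ is of the form
\[
\tilde\tau_{j,a} = \bigl( \Uniformizer^a\Lambda_j < \ldots < \Uniformizer^a\Lambda_k < \Uniformizer^{a-1}\Lambda_0 < \ldots < \Uniformizer^{a-1}\Lambda_{j-1} \bigr)
\]
for uniquely determined $(j,a) \in \{0,\ldots,k\} \times \Z$. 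The basic identity $\varepsilon(\Uniformizer\Lambda) = \varepsilon(\Lambda) - 1$, which uses $\length(\Lambda/\Uniformizer\Lambda) = n$, implies that $F(\tilde\tau_{j,a})$ is the graph of an affine function on $\tau$, and these graphs fit together along their codimension-$1$ faces to form the standard ``staircase'' triangulation of the prism $\tau \times \SimpLine$.

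Thus $F$ restricts on $F^{-1}(\tau \times \SimpLine)$ to a piecewise-affine bijection onto $\tau \times \SimpLine$, and this bijection is a homeomorphism because $F$ sends the simplicial structure on the left to the staircase structure on the right, cell by cell. Gluing over all simplices $\tau$ of $\SimpBuilding$ gives the global statement. The main obstacle is verifying that the lifts of $\tau$ are exactly the $\tilde\tau_{j,a}$: one must show that any chain $\Lambda'_0 < \ldots < \Lambda'_k$ with $\Uniformizer\Lambda'_k \le \Lambda'_0$ and $[\Lambda'_i] = [\Lambda_{\pi(i)}]$ for a cyclic permutation $\pi$ has this rotated form, which boils down to the observation that within the $\Z$-indexed family $\{\Uniformizer^a \Lambda_i\}$ any admissible chain occupies a window of $k+1$ consecutive lattices.
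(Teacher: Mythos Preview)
Your equivariance computation is more detailed than the paper's (which simply declares it ``clear''), and your cell-by-cell strategy is sound in principle. However, there is a genuine gap in the enumeration: it is not true that every simplex of $\ESimpBuilding$ mapping onto $\tau$ has the form $\tilde\tau_{j,a}$. You have listed only the $k$-simplices over the $k$-simplex $\tau$, and you are missing the $(k+1)$-simplices
\[
\sigma_{j,a} \;=\; \bigl(\Uniformizer^a\Lambda_j < \cdots < \Uniformizer^a\Lambda_k < \Uniformizer^{a-1}\Lambda_0 < \cdots < \Uniformizer^{a-1}\Lambda_j\bigr),
\]
whose first and last vertices lie in the same homothety class and which therefore also satisfy $\delta(\sigma_{j,a}) = \tau$. (A short argument shows these are the \emph{only} further simplices over $\tau$: if two vertices of a chain with $\Uniformizer\Lambda_m \le \Lambda_0$ are homothetic, they must be the extremal ones.) The $\sigma_{j,a}$ are precisely the top-dimensional cells of the staircase triangulation of $\trealize\tau \times \SimpLine$, while your $\tilde\tau_{j,a}$ are only their shared codimension-one ``bent'' faces. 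Since each $F(\tilde\tau_{j,a})$ is the graph of an affine function on $\tau$ --- a $k$-dimensional subset --- these graphs cannot cover the $(k+1)$-dimensional prism, and your surjectivity argument does not go through as written. Once the $\sigma_{j,a}$ are included, the rest of your outline is correct.

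For comparison, the paper establishes bijectivity pointwise rather than cellwise: given $x = \sum_j \alpha_j [\Lambda_j] \in \trealize\SimpBuilding$ and $r \in \SimpLine$, an elementary interval-covering lemma (Lemma~\ref{lem:convex_combinations}) locates the unique convex combination of lattices $\Uniformizer^?\Lambda_?$ in $\trealize\ESimpBuilding$ with image $(x,r)$. This is essentially the same staircase picture read off one fibre at a time, and it sidesteps the need to enumerate the simplices over $\tau$. Closedness is then obtained by noting that $(\trealize\delta \times \varepsilon)^{-1}(\trealize\sigma \times [a,b])$ is compact.
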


\begin{proof}
That the map is continuous and $\GL(V)$-equivariant is clear.

Bijectivity follows from Lemma~\ref{lem:convex_combinations} below: let $\Point \in \trealize{\SimpBuilding}$ be the convex combination
\[
\Point = \sum_{j=0}^{n-1} \alpha_j \EqClass{\Lambda_j}
\]
where we choose the representatives $\Lambda_j$ so that $\varepsilon(\Lambda_j) = j/n$. Let further $r \in \R$. The lemma exhibits an $i \in \Z$ and a $\beta \in [0,1)$ such that
\begin{align*}
\tilde{\Point} =  &\sum_{j=1}^{n-1} \alpha_{(i+j) \omod n} (\Uniformizer^{(i+j) \odiv n} \Lambda_{(i+j) \omod n})\\
& + \beta \alpha_{i \omod n} (\Uniformizer^{i \odiv n} \Lambda_{i \omod n}) + (1 - \beta) \alpha_{i \omod n} (\Uniformizer^{(i + n)\odiv n} \Lambda_{i \omod n})
\end{align*}
satisfies $\varepsilon(\tilde{\Point}) = r$, $\delta(\tilde{\Point}) = \Point$, and $\tilde{\Point}$ is unique with these properties.

To see that the map is closed, it suffices to consider sets of the form $C = \trealize{\sigma} \times [a,b]$ for $\sigma$ a cell of $\trealize{\SimpBuilding}$ and show that the restriction
\[
(\trealize{\delta} \times \varepsilon)^{-1}(C) \to C
\]
is closed. But sets of the form $(\trealize{\delta} \times \varepsilon)^{-1}(C)$ are clearly compact.
\end{proof}

\begin{lem}
\label{lem:convex_combinations}
Let $\alpha_j,0\le j \le n-1$ be such that $\alpha_j \ge 0$ and $\sum \alpha_j = 1$. For $i \in \Z$ set
\[
c_i = \sum_{j=0}^{n-1} \alpha_{(i+j) \omod n} \frac{i+j}{n}\text{ .}
\]
The intervals $[c_i,c_i+\alpha_{i \omod n})$ with $i \in \Z$ and $\alpha_{i \omod n} > 0$ disjointly cover $\R$.

In other words, for every $r \in \R$ there are $i \in \Z$ and $\beta \in [0,1)$ so that
\[
r = \beta\alpha_{i \omod n} \frac{i}{n} + \sum_{1}^{n-1} \alpha_{(i+j) \omod n} \frac{i+j}{n} + (1-\beta)\alpha_{(i+n) \omod n} \frac{i+n}{n}
\]
and these are unique if we require $\alpha_{i \omod n} \ne 0$.
\end{lem}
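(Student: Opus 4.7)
The core of the proof is the one-line identity
\[
c_{i+1} - c_i = \alpha_{i \omod n}.
\]
To see this, reindex the sum defining $c_{i+1}$ by $j' = j+1$, which yields
$c_{i+1} = \sum_{j'=1}^{n} \alpha_{(i+j') \omod n} (i+j')/n$. Subtracting $c_i = \sum_{j=0}^{n-1} \alpha_{(i+j) \omod n}(i+j)/n$ leaves only the two boundary terms $\alpha_{(i+n) \omod n}(i+n)/n - \alpha_{i \omod n}\, i/n$, and by periodicity of indices modulo $n$ one has $\alpha_{(i+n) \omod n} = \alpha_{i \omod n}$, giving the claim. Consequently the interval $[c_i, c_i+\alpha_{i \omod n})$ is precisely $[c_i, c_{i+1})$, so consecutive intervals are adjacent; those with $\alpha_{i \omod n} = 0$ are empty and can be discarded without affecting the union, while the remaining intervals are pairwise disjoint.

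To confirm the intervals cover all of $\R$, I would split
\[
c_i = \frac{i}{n} \sum_{j=0}^{n-1} \alpha_{(i+j) \omod n} + \sum_{j=0}^{n-1} \alpha_{(i+j) \omod n}\, \frac{j}{n}
\]
and use $\sum \alpha_j = 1$ (together with the periodicity of the indexing) to conclude that the first term is $i/n$ and the second lies in $[0,1)$; hence $c_i \to \pm\infty$ as $i \to \pm\infty$, and the disjoint adjacent intervals fill the whole line. For the equivalent ``in other words'' formulation, denote the right-hand side of the displayed formula by $E(\beta)$. A direct evaluation (using once more $\alpha_{(i+n) \omod n} = \alpha_{i \omod n}$) gives $E(0) = c_{i+1}$ and $E(1) = c_i$, so $E$ is affine in $\beta$ with slope $-\alpha_{i \omod n}$; given $r \in \R$, the partitioning statement selects the unique $i$ with $\alpha_{i \omod n} > 0$ for which $r$ lies in the corresponding tile, and $\beta$ is then recovered uniquely by inverting the affine function $E$.

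The only real obstacle is bookkeeping: keeping the $\omod n$ indexing straight when shifting $i \mapsto i+1$, and being careful about which endpoint belongs to which tile so that uniqueness of $(i,\beta)$ is genuine and not spoilt by boundary points. Once the telescoping identity $c_{i+1} - c_i = \alpha_{i \omod n}$ is in hand, everything else is a routine verification.
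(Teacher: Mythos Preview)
Your proposal is correct and follows exactly the paper's approach: the paper's proof consists solely of the sentence ``This amounts to saying $c_{i+1} - c_i = \alpha_{i \omod n}$, which is elementary,'' and you have supplied precisely this telescoping identity together with the routine verifications the paper leaves implicit.
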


\begin{proof}
This amounts to saying $c_{i+1} - c_i = \alpha_{i \omod n}$, which is elementary.
\end{proof}

\providecommand{\bysame}{\leavevmode\hbox to3em{\hrulefill}\thinspace}
\providecommand{\MR}{\relax\ifhmode\unskip\space\fi MR }
\providecommand{\MRhref}[2]{%
  \href{http://www.ams.org/mathscinet-getitem?mr=#1}{#2}
}
\providecommand{\href}[2]{#2}

\bigskip

\noindent
\begin{minipage}[t]{0.5\textwidth}
    Stefan Witzel\\
    Mathematisches Institut\\
    Universität Münster\\
    Einsteinstraße 62\\
    48149 Münster\\
    Germany\\
    e-mail: \texttt{s.witzel@uni-muenster.de}
\end{minipage}

\end{document}